\documentclass[12pt]{article}

%%%%%%%%%%%%%%%%%%%
% Page Layout
%%%%%%%%%%%%%%%%%%%

\setlength{\paperwidth}{8.5in} \setlength{\paperheight}{11in}
\setlength{\marginparwidth}{0in} \setlength{\marginparsep}{0in}
\setlength{\oddsidemargin}{0in} \setlength{\evensidemargin}{0in}
\setlength{\textwidth}{6.5in} \setlength{\topmargin}{-0.5in}
\setlength{\textheight}{9in}
\setlength{\parskip}{1mm} % 1ex plus 0.5ex minus 0.2ex}
\setlength{\parindent}{10pt}

%%%%%%%%%%%%%%%%%%%%%%%%%%%%%%%%%%%
% Include Packages and Style Files
%%%%%%%%%%%%%%%%%%%%%%%%%%%%%%%%%%%

\usepackage[english]{babel}
\usepackage{amsmath,amssymb,amsthm}
\usepackage{cite}
\usepackage{enumerate,mathtools}
\usepackage{todonotes}
\usepackage{nicefrac}
\usepackage{hyperref,xcolor,paralist}
\hypersetup{
colorlinks=true,
linkcolor=red,
citecolor = blue,      
}

%%%%%%%%%%%%%%%%%%%%%%%%%%%%%%%%%%%%%%%%
%%%%%%  Bibliography Commands %%%%%%%%%%

\providecommand{\bysame}{\leavevmode\hbox to3em{\hrulefill}\thinspace}
\providecommand{\MR}{\relax\ifhmode\unskip\space\fi MR }
% \MRhref is called by the amsart/book/proc definition of \MR.

\providecommand{\href}[2]{#2}

%%%%%%%%%%%%%%%%%%%%%%%%%%%%%%
% Define theorem environments

\theoremstyle{plain}
\newtheorem{theorem}{{\bf{Theorem}}}[section]

\newtheorem*{assumption}{{\bf Assumption}}
\newtheorem{lemma}[theorem]{{\bf Lemma}}

\theoremstyle{definition}
\newtheorem*{remark}{{\bf {Remark}}}

\numberwithin{equation}{section}
%%%%%%%%%%%%%%%%%%%%%%%%%%%%%%

%%%%%%%%%%%%%%%%%%%%%%
% Define new commands
%%%%%%%%%%%%%%%%%%%%%%

\newcommand{\R}{\mathbb{R}}

\newcommand{\Z}{\mathbb{Z}}
\newcommand\blfootnote[1]{
	\begingroup
	\renewcommand\thefootnote{}\footnote{#1}
	\addtocounter{footnote}{-1}
	\endgroup
}
\newcommand{\im}{\mathrm{i}}

\newcommand{\E}{\mathbf{E}}

\hypersetup{
	colorlinks=true,
	linkcolor=red,
	citecolor = blue,      
}

\newcommand{\repsi}{\mathrm{Re}\Psi}
\newcommand{\impsi}{\mathrm{Im}\Psi}
\newcommand{\normxi}{\|\xi\|}
\newcommand{\Iinf}{\mathsf{I}_{\infty}}
\newcommand{\Izero}{\mathsf{I}_{0}}
\newcommand{\indl}{\mathsf{IND}_{\ell}}                                  %%%%%% Lower Indices %%%%
\newcommand{\indm}{\mathsf{IND}_{m}}                               %%%%%% Middle Indices %%%% 
\newcommand{\indu}{\mathsf{IND}_{u}}   
\newcommand{\drift}{\mathrm{\mathbf{d}}}
\newcommand{\gauss}{\mathrm{\mathbf{Q}}} 
\newcommand{\jump}{\mathbf{\nu}}                           %%%%%% Upper Indices %%%%

%\DeclareFieldFormat[article]{title}{\mkbibitalic{#1}}
%\DeclareFieldFormat[article]{citetitle}{\mkbibitalic{#1}}
%\addbibresource{References.bib}

%%%%%%%%%%%%%%%%%%%%%%
% Begin document
%%%%%%%%%%%%%%%%%%%%%%\

\title{\bf{H\"{o}lder regularity for a class of nonlinear stochastic heat equations}}
\author{Sudheesh Surendranath  \thanks{Department of Mathematics, University of Utah \texttt{sudheesh@math.utah.edu}}}
\begin{document}
	\date{}
	\maketitle
	\blfootnote{
	\noindent{\bf 2020 Mathematics Subject Classifications.} Primary 60H15; Secondary 60G17, 60G60 \\
	\noindent{\bf Keywords.} Stochastic partial differential equations, H\"older continuity, L\'{e}vy processes
    }
    
	\begin{abstract}
	We investigate the sample path regularity of solutions to a class of stochastic partial differential equations of the form  
    \begin{equation*}
    \frac{\partial u}{\partial t}=\mathcal{L}u+b(u)+ \sigma(u)\dot{F} \, ,
    \end{equation*}
	 subject to a suitable initial condition. The noise term $\dot{F}$ is the generalized derivative of a random spatially homogeneous Gaussian distribution, which is white in time and colored in space, and $\mathcal{L}$ is the $L^{2}$-generator of a L\'evy process. Under certain growth assumptions on the characteristic exponent of the L\'{e}vy process, we derive sufficient conditions for the solution to be locally H\"older continuous jointly in space and time. Moreover, we show that these conditions are equivalent to those derived in related papers \cite{sanz2000path, khoshnevisan2022optimal, sanz2002holder}.
	\end{abstract}
	\section{Introduction}
	In this paper, we study the H\"{o}lder regularity of the solutions to a class of $(n+1)$-dimensional nonlinear stochastic heat equations. Consider
	\begin{equation}{\label{eqn:spde}}
	\frac{\partial u(t,x)}{\partial t}=(\mathcal{L}u)(t,x)+ b(u(t,x)) +\sigma(u(t,x))\dot{F}(t,x) \quad \text{for} \quad (t,x) \in (0,\infty) \times \mathbb{R}^{n} \, ,
	\end{equation}
	\begin{equation*}
	\text{with } u(0,x)=u_{0}(x), \quad x \in \mathbb{R}^{n}.
	\end{equation*}
	Here $\mathcal{L}$ denotes the infinitesimal generator of an $n-$dimensional L\'evy process with characteristic exponent $\Psi$. The function $u_{0}: \mathbb{R}^{n} \to \mathbb{R}$ is a bounded deterministic, $\rho-$H\"older continuous function and $b, \sigma: \mathbb{R} \to \mathbb{R}$ are Lipschitz continuous. The noise term $\dot{F}$ is the generalized derivative of random centered Gaussian Schwartz distribution. In particular, it is a centered $L^{2}(\Omega, \mathcal{F}, \mathbf{P})-$valued Gaussian process $\{F(\varphi): \varphi \in \mathcal{S}([0, \infty) \times \mathbb{R}^{n})\}$ with covariance structure
	\begin{equation}{\label{eqn:covarianceofnoise_1}}
	\mathrm{Cov}\left(F(\varphi_{1}),F(\varphi_{2})\right)=\int_{0}^{\infty}\int_{\mathbb{R}^{n}}(\varphi_{1} \ast \widetilde{\varphi_{2}})(t,x) \, \Gamma(dx) \, dt.
	\end{equation}
	Here, '$\ast$' denotes convolution in the spatial variable, $\widetilde{\varphi}(t,x) \coloneqq \varphi(t,-x)$ and $\Gamma$ is a tempered, non-negative and non-negative definite Borel measure on $\mathbb{R}^{n}$.
	Formally, one writes
	\begin{equation*}
	\mathrm{Cov}\left(\dot{F}(t_{1},x_{1}),\dot{F}(t_{2},x_{2})\right)=\delta_{0}(t_1-t_2)\Gamma(x_1-x_2) \, .
	\end{equation*}
	When $\Gamma=\delta_{0}$, $\dot{F}$ is usually termed \textit{white noise} in the literature. According to the Bochner-Minlos-Schwartz Theorem \cite{schwartz1966theorie}, $\Gamma$ is the Fourier transform of a non-negative, tempered measure. That is,
	\begin{equation}{\label{eqn:bochnerminlosschwarz}}
	\int_{\mathbb{R}^{n}} \varphi(x) \, \Gamma(dx)=\int_{\mathbb{R}^{n}}\widehat{\varphi}(\xi) \, \mu(d\xi) \, ,
	\end{equation}
	where 
	\begin{equation*}
	\widehat{\varphi}(\xi)=\int_{\mathbb{R}^{n}}\varphi(x)e^{i\langle \xi,x\rangle} \,dx \quad \text{and} \quad 	\int_{\mathbb{R}^{n}}\frac{\mu(d\xi)}{(1+\normxi^2)^k} < \infty \, ,
	\end{equation*}
	for some $k \in \mathbb{N}$. With this in mind, equation \eqref{eqn:covarianceofnoise_1} can be rewritten as 
	\begin{equation}{\label{eqn:covarianceofnoise_2}}
		\mathrm{Cov}\left(F(\varphi_{1}),F(\varphi_{2})\right)= \int_{0}^{\infty}\int_{\R^n}\widehat{\varphi}_{1}(t,\xi)\overline{\widehat{\varphi}_{2}(t,\xi)}\, \mu(d\xi) \, dt \, .
	\end{equation}

    Regarding the characteristic exponent $\Psi$, we impose the following condition throughout the paper:
    \begin{equation*}
    	\Psi^{-1}(0) = \{ 0\}
    \end{equation*}
    This is a non-degeneracy condition. It ensures that the underlying L\'{e}vy process is not concentrated on a proper subspace of $\R^n$. To see why, see \cite[Section~2.1]{khoshnevisan2022optimal}.
    
	When $\mathcal{L}= \Delta$, equation \eqref{eqn:spde} reduces to the classical stochastic heat equation, which has been extensively studied. The solution theory for such SPDEs is formulated using a theory of stochastic integration first developed by Walsh \cite{walsh1986introduction} and later extended by Dalang \cite{dalang1999extending}; see \cite{dalang2011stochastic} for a detailed survey of such integration theories and how they relate to the other notions of solutions. In these approaches, the solution map is a random-field $(t,x) \mapsto u(t,x)$ over space-time. Subsequent work by Foondun and Khoshnevisan \cite{foondun2013stochastic} extended this approach to more general generators $\mathcal{L}$. The optimal condition for the existence of a solution to \eqref{eqn:spde} is then given by:
	\begin{equation}{\label{eqn:dalangcondition}}
		\int_{\mathbb{R}^{n}}\frac{\mu(d\xi)}{1+\repsi({\xi})}<\infty \, .
	\end{equation}

    Once a solution to \eqref{eqn:spde} is established, a natural question to ask is about its sample path regularity. There is a large body of literature that discuss this question. See for example, the papers: Balan \cite{balan2011lp}, Balan-Jolis-Quer-Sardanyon \cite{balan2015holder}, Balan-Jolis-Quer-Sardanyon \cite{balan2016spdes}, Balan-Chen \cite{balan2018holder}, Balan-Quer-Sardanyon \cite{balan2019holder}, Balan-Song \cite{balan2016HyperbolicAM}, Chen-Dalang \cite{chen2014holder},Conus-Dalang \cite{conus2008holder}, Dalang-Sanz-Sol\'{e} \cite{dalang2009},  Dalang-Sanz-Sol\'{e} \cite{dalang2024}, Dalang-Zhang \cite{dalang2017holder}, Guo-Song-Song \cite{guo2024}, Hu-Lu-Nualart \cite{hu2013holder}, Hu-Huang-Nualart \cite{hu2014holder}, Hu-Huang-Nualart-Tindel \cite{hu2015stochastic}, Hu-Nualart-Song \cite{hu2013nonlinear}, Hu-Huang-L\^{e}-Nualart-Tindel \cite{hu2017stochastic}, Hu-L\^{e} \cite{hu2019joint}, Hu-Nualart-Xia \cite{hu2019holder} Khoshnevisan-Sanz-Sol\'{e} \cite{khoshnevisan2022optimal}, Millet-Sanz-Sol\'{e} \cite{millet1999},  Sanz-Sol\'{e}-Sarra \cite{sanz2000path}, Sanz-Sol\'{e}-Sarra \cite{sanz2002holder},  Sanz-Sol\'{e}-Vuillermot \cite{sanzsole2003}, Sanz-Sol\'{e}-Vuillermot \cite{sanzsole2009}, Song \cite{song2017} and Song-Song-Xu \cite{song2020fractional}. 
    
    The approach we will use is to use a version of the Kolmogorov continuity theorem (\cite{khoshnevisan2014analysis} Appendix C.2, \cite{khoshnevisan2009primer}). This requires establishing moment estimates of the increments of the solution. Before we describe some results which purse the same approach, we introduce some definitions. 
    \begin{equation*}
    	\begin{split}
    		\indu & \coloneqq \sup \left\{ \eta \in (0,1) : \int_{\R^n}\frac{\mu(d\xi)}{(1 + \repsi(\xi))^{1-\eta}} < \infty \right\}, \\
    		\indm & \coloneqq \sup \left\{ \gamma \in (0,1) : \int_{\R^n} \frac{|\Psi(\xi)|^{\gamma}\mu(d\xi)}{1 + \repsi(\xi)} < \infty \right\}, \qquad \text{and} \\
    		\indl & \coloneqq \sup \left\{ \delta \in (0,1) : \int_{\R^n} \frac{\| \xi \|^{2\delta}\mu(d\xi)}{1 + \repsi(\xi)} < \infty \right\}
    	\end{split}
    \end{equation*}
    with $\sup \emptyset \coloneqq 0$. The subscripts stand for \textit{upper}, \textit{middle} and \textit{lower} respectively. These \textit{fractal indices} were introduced in Khoshnevisan-Sanz-Sol\'{e} \cite{khoshnevisan2022optimal} in relation to the H\"{o}lder exponents of the solution to a particular case of the SPDE \eqref{eqn:spde}. Using a certain growth estimate on $\Psi$ (see equation \eqref{eqn:khinchtineestimate}), it is not hard to verify that
    \begin{equation*}
    	\indl \leqslant \indm \leqslant \indu .
    \end{equation*}
    However, the inequality can be strict (see for example \cite[Proposition~2.1]{khoshnevisan2022optimal}). 
    
	For the classical case $\mathcal{L}=\Delta$ (that is, $\Psi(\xi)=\repsi(\xi)=\normxi^2$), Sanz-Sol\'e and Sarra \cite{sanz2000path, sanz2002holder} have proven that the following condition:
	\begin{equation}
	\int_{\mathbb{R}^{n}}\frac{\mu(d\xi)}{(1+\normxi^{2})^{1-\eta}}< \infty \quad \text{for some } \eta \in (0,1)
	\end{equation}
	is sufficient for the H\"older continuity of the solution to \eqref{eqn:spde}. In terms of the fractal indices, this is equivalent to the condition that $\indu > 0$. In particular, Sanz-Sol\'{e} and Sarra have shown that $u \in \mathcal{C}^{\alpha, \beta}_{\operatorname{loc}}(\R_{> 0} \times \R^n)$ for all $\alpha < \indu$ and $\beta < \nicefrac{\indu}{2}$. In \cite{khoshnevisan2022optimal}, Khoshnevisan and Sanz-Sol\'e have established sufficient and necessary conditions for the H\"older regularity to \eqref{eqn:spde} in the case when $\sigma \equiv 1$, or the \textit{additive noise} case. In particular, they showed that if 
	\begin{equation}{\label{eqn:optimal1}}
	\int_{\mathbb{R}^{n}}\frac{|\Psi(\xi)|^{\gamma}\mu(d\xi)}{1+\repsi(\xi)} < \infty \quad \text{for some } \gamma \in (0,1) \, ,
	\end{equation} then the solution to \eqref{eqn:spde} is locally H\"older continuous in time. Under the stronger condition,
	\begin{equation}{\label{eqn:optimal2}}
	\int_{\mathbb{R}^{n}}\frac{\normxi^{2\delta} \mu(d\xi)}{1+\repsi(\xi)} < \infty \quad \text{for some } \delta \in (0,1) \,
	\end{equation} the solution is also H\"older continuous in space. Moreover, these conditions are optimal in the sense that they are sufficient as well as necessary. In terms of the fractal indices, condition \eqref{eqn:optimal1} is equivalent to that $\indm > 0$ and condition \eqref{eqn:optimal2} to $\indl > 0$. In this case, the solution $u \in \mathcal{C}^{\alpha, \beta}_{\operatorname{loc}}(\R_{>0} \times \R^n)$ for all $\alpha < \nicefrac{\indm}{2}$ and $\beta < \indl$.

   The goal of this paper is to derive sufficient conditions for the H\"older regularity of the random-field solution to \eqref{eqn:spde}. In the present setting, the function $\sigma$ can be nonlinear.
    Although the result of \cite{khoshnevisan2022optimal} holds for any characteristic exponent (and hence any $\mathcal{L}$), our results are only valid when $\Psi$ satisfies some growth condition near zero and infinity (see Assumption in the next section). 
    
   %This has the implication that there exists a L\'{e}vy process such that the additive noise SHE driven by the generator has paths which are almost surely H\'{o}lder continous in time but not in space. 

     The following is the main theorem of this paper:
	\begin{theorem}{\label{thm:main_thm}}
		Let $\{u(t,x): (t,x) \in [0, \infty) \times \mathbb{R}^{n}\}$ be the random-field solution to the SPDE \eqref{eqn:spde}. Under Assumptions \ref{assumption1} and \ref{assumption2}, the following equality holds
		\begin{equation*}
			\indl = \indm = \indu .
		\end{equation*}
	    In this case, if
		\begin{equation}{\label{eqn:sanz-sole_sarra}}
		\int_{\mathbb{R}^{n}}\frac{\mu(d\xi)}{(1+\repsi(\xi))^{1-\eta}} < \infty \quad \text{for some } \eta \in (0,1) \, ,
		\end{equation}
		i.e.\ if $\indu > 0$ (and hence $\indm, \indl > 0$), then the function $(t,x) \to u(t,x)$ is a.s locally H\"older continuous on $\R_{>0} \times \R^n$.
		In other words, the conditions \eqref{eqn:sanz-sole_sarra},\eqref{eqn:optimal1} and \eqref{eqn:optimal2} are all equivalent and hence any one of them is sufficient for the H\"older continuity of the solution.
	\end{theorem}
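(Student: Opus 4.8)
The plan is to attack the theorem in two logically separate pieces. The first piece is the purely analytic statement that, under the growth hypotheses on $\Psi$ near zero and near infinity (Assumptions \ref{assumption1} and \ref{assumption2}), the three fractal indices collapse to a single number, $\indl=\indm=\indu$. The second piece is then a routine deduction: once the indices coincide, the hypothesis $\indu>0$ is the same as $\indm>0$ and $\indl>0$, and so the sufficient conditions of Sanz-Sol\'e--Sarra and of Khoshnevisan--Sanz-Sol\'e all kick in simultaneously, yielding joint local H\"older continuity. Most of the real work lives in the first piece.

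For the equality of indices I would proceed as follows. The inequalities $\indl\le\indm\le\indu$ are already granted in the excerpt (they follow from the Khinchin-type growth estimate \eqref{eqn:khinchtineestimate} relating $\|\xi\|^{2}$, $|\Psi(\xi)|$ and $1+\repsi(\xi)$), so it suffices to prove $\indu\le\indl$. The idea is that the growth assumption should give two-sided polynomial-type control: there exist an exponent $a\in(0,2]$ and constants such that $\repsi(\xi)$ and $|\Psi(\xi)|$ are both comparable to $\|\xi\|^{a}$ for large $\|\xi\|$ (and a complementary comparison for small $\|\xi\|$, where the integrals converge trivially since $\mu$ is locally finite after the $(1+\|\xi\|^{2})^{-k}$ normalization). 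Granting such comparability, each of the three defining integrals becomes, up to constants, an integral of the form $\int \|\xi\|^{(\text{linear function of the index})}\,(1+\|\xi\|^{a})^{-1}\,\mu(d\xi)$ over $\{\|\xi\|\ge 1\}$, and a change of the index variable that matches the exponents shows the three suprema are equal. Concretely: the $\indu$-integral has large-$\xi$ integrand $\asymp \|\xi\|^{-a(1-\eta)}\mu(d\xi)$; the $\indl$-integral has integrand $\asymp\|\xi\|^{2\delta-a}\mu(d\xi)$; setting $2\delta-a=-a(1-\eta)$, i.e.\ $\delta=a\eta/2$, and recalling $a\le 2$ shows the finiteness regions correspond, so the suprema agree. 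The $\indm$-integral, with integrand $\asymp\|\xi\|^{a\gamma-a}\mu(d\xi)$, is squeezed between the other two by the same bookkeeping. I would state the needed comparability as a lemma extracted from the Assumptions and prove the index equality as a short corollary.

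The main obstacle is making the comparability precise and global rather than merely asymptotic. Assumptions \ref{assumption1} and \ref{assumption2} presumably give regular variation (or one-sided bounds) of $\repsi$ near $0$ and near $\infty$ separately, possibly with different exponents at the two ends; one must check that the convergence/divergence of the three integrals is insensitive to the behaviour on any fixed annulus (true, using that $\mu$ integrates $(1+\|\xi\|^{2})^{-k}$ and that on an annulus all three integrands are bounded above and below by constants times each other) and that the two ends can be handled independently and then patched. A secondary subtlety is the endpoint/strictness question: the indices are defined as suprema over open intervals, so I must be careful that the comparison of integrands, which may lose multiplicative constants, does not shift a supremum — but since we are only comparing where integrals are finite versus infinite and a bounded multiplicative constant never changes that dichotomy, the suprema are genuinely preserved. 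Once $\indl=\indm=\indu$ is in hand, the final sentence of the theorem is immediate: $\indu>0$ forces $\indm,\indl>0$, hence \eqref{eqn:optimal1} and \eqref{eqn:optimal2} hold, hence by \cite{khoshnevisan2022optimal} (and consistently with \cite{sanz2000path,sanz2002holder}) the solution lies in $\mathcal{C}^{\alpha,\beta}_{\operatorname{loc}}(\R_{>0}\times\R^{n})$ for suitable $\alpha,\beta>0$, which is exactly local H\"older continuity jointly in time and space.
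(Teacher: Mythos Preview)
Your treatment of the index equality $\indl=\indm=\indu$ is essentially the paper's Lemma~\ref{lem:equivalent_conditions}, though with one imprecision: the Assumptions do \emph{not} give two-sided comparability $\repsi(\xi)\asymp|\Psi(\xi)|\asymp\|\xi\|^{a}$ with a single exponent $a$. Assumption~\ref{assumption1} is only a $\liminf$ and yields a lower bound $\repsi(\xi)\gtrsim\|\xi\|^{\Iinf}$ for large $\|\xi\|$; the upper bound comes from the universal Khinchin estimate $|\Psi(\xi)|\lesssim 1+\|\xi\|^{2}$. These one-sided bounds (with different exponents) are exactly what the paper uses, and your computation $\delta=a\eta/2$ survives once you set $a=\Iinf$ for the lower bound and $a=2$ for the upper. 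Note also that Assumption~\ref{assumption2} is not used in this step.

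The real gap is in your second piece. You propose to conclude H\"older continuity by citing \cite{khoshnevisan2022optimal}, but that paper treats only the additive case $\sigma\equiv 1$ (as the Introduction here states explicitly), and the Sanz-Sol\'e--Sarr\`a papers \cite{sanz2000path,sanz2002holder} treat only $\mathcal{L}=\Delta$. Neither covers the present setting of nonlinear $\sigma$ with a general L\'evy generator, which is the entire point of Theorem~\ref{thm:main_thm}. The paper therefore cannot and does not outsource this step: it proves the H\"older regularity from scratch via Kolmogorov's continuity theorem. The work is in obtaining $L^{p}$ bounds on the increments of $u$, and for the stochastic integral term (after BDG) the key difficulty is controlling
\[
\int_{0}^{t}\int_{\R^{n}}\int_{\R^{n}}|p_{s}(z-x-h)-p_{s}(z-x)|\,|p_{s}(z-y-x-h)-p_{s}(z-y-x)|\,\Gamma(dy)\,dz\,ds.
\]
The paper handles this by bounding one factor pointwise by a sum of densities (Fourier side, giving $\int e^{-s\repsi}\mu(d\xi)$) and the other factor in $L^{1}$ via Lemma~\ref{lem:L1_bounds}, which produces a bound of the form $(\|h\|^{\theta}/s^{c})^{\tau\Izero/(n+\Izero)}$. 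That $L^{1}$ estimate in turn rests on the moment growth $\mathbf{E}\|X_{t}\|^{\Izero}\lesssim t^{\kappa}$ of Lemma~\ref{lem:moment_growth2}, and this is precisely where Assumption~\ref{assumption2} enters. Your proposal skips all of this machinery, so as written it does not establish the second half of the theorem.
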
  
    The key step in the proof involves establishing certain $L^1$ bounds (see Lemma \ref{lem:L1_bounds}) on the transition densities of the L\'evy process. These estimates rely critically on the growth assumptions on the characteristic exponent $\Psi$. The H\"{o}lder indices that we obtain do not seem to optimal as suggested by those in \cite{khoshnevisan2022optimal}. Our methods also do not seem to translate to the wave equation. We leave these questions for future work. 
    
    The paper is organized as follows: in Section \ref{sec:levyprocesses}, we state our main assumptions and derive the aforementioned $L^1$ bounds on the transition densities. In Section \ref{sec:holdercontinuity}, we prove Theorem \ref{thm:main_thm} using the results proven in Section \ref{sec:levyprocesses}.
	\begin{remark}
	 For two functions $f$ and $g$, we write $f \lesssim g$ if there exists a constant $C>0$, independent of the variables of $f$ and $g$, such that $f \leqslant Cg$. In other words, $f$ is bounded above by $g$ upto a universal constant factor. 
	\end{remark}

\section{Preliminaries}{\label{sec:levyprocesses}}
    In this section, we will introduce our main Assumptions on our L\'{e}vy process and use them to establish the necessary lemmas for proving Theorem \ref{thm:main_thm}. These lemmas rely on certain moment estimates of the underlying L\'{e}vy process. For this reason, let us first recall some basic properties. Consider a process $\{X_{t}\}_{t \geqslant0}$ on $\mathbb{R}^{n}$. By definition, it is a L\'{e}vy process if $X_{0} = 0$ a.s., the paths $t \mapsto X_{t}$ are c\`adl\`ag, and the process has stationary and independent increments. A fundamental result, the L\'{e}vy–Khintchine formula \cite{ken1999levy}, tells us that the characteristic function of $X_{t}$ is uniquely determined by a continuous function $\Psi: \mathbb{R}^{n} \to \mathbb{C}$, known as the \emph{characteristic exponent} of $X$. That is,
	\begin{equation*}
	\mathbf{E}\left(e^{\im \langle \xi, X_{t}\rangle}\right)=e^{-t\Psi(\xi)} \, ,
	\end{equation*}
	where
	\begin{equation*}
	\Psi(\xi)=\im (\langle \drift, \xi \rangle)+\frac{1}{2}\langle \xi,\gauss\xi \rangle+\int_{\mathbb{R}^{n}}\left(1-e^{\im \langle \xi,x \rangle}+ \im \langle \xi,x\rangle \mathbf{1}_{\{\|x\| \leqslant 1\}}\right)\jump(dx) \, .
	\end{equation*}
	Here, the vector $\drift \in \mathbb{R}^{n}$ is called the \emph{drift vector}, and it captures the deterministic linear trend of the process. The matrix $\gauss$ is positive-definite and acts as the covariance matrix of the Gaussian component, describing the continuous fluctuations. Finally, $\jump$ is a $\sigma$-finite Borel measure, known as the \emph{Lévy measure}, which accounts for the jumps of the process. It satisfies $\jump(\{0\}) = 0$ and
	\begin{equation*}
		\int_{\R^n}(1 \wedge \|x\|^2) \, \jump(dx) < \infty
	\end{equation*}
    ensuring that small jumps do not accumulate too wildly. Together, the triplet $(\drift, \gauss, \jump)$ uniquely determines the characteristic exponent $\Psi$. An important and useful consequence of this representation is the following growth estimate for $\Psi$, which will play a recurring role in our arguments:
	\begin{equation}{\label{eqn:khinchtineestimate}}
	\sup_{\xi \in \mathbb{R}^{n}}\frac{|\Psi(\xi)|}{1+\|\xi\|^{2}}< \infty.
	\end{equation}
    
     We now state our assumptions on the characteristic exponent of the underlying L\'evy process:
	\begin{assumption}{\label{assumption}} Assume $\Psi$ satisfies the following conditions:
		\begin{enumerate}
			\item \label{assumption1} $\liminf_{\normxi \to \infty}\frac{\log \repsi(\xi)}{\log \normxi}= \Iinf>0 \text{ for some } \Iinf \in (0,2)$ ; \\
			\item \label{assumption2} $\int_{\{\normxi \leqslant 1\}}\frac{|\Psi(\xi)| }{\normxi^{n+\Izero}}\,d\xi < \infty \text{ for some } \Izero \in (0,1)$ ; \\ 
		\end{enumerate}
	\end{assumption}
	
    The constant $\Iinf$, known as the Blumenthal–Getoor index \cite{blumenthal1961sample}, characterizes the growth of $\Psi(\xi)$ at infinity, in the sense that $\repsi(\xi) \gg \|\xi\|^{\Iinf}$ when $\|\xi\|$ is large. The index $\Izero$ serves as an analogous measure near the origin, in that $|\Psi(\xi)|$ grows no faster than $\|\xi\|^{\Izero - \varepsilon}$ near the origin, for any arbitrarily small $\varepsilon > 0$. This condition ensures the existence of moments of $X_{t}$ (see Lemma \ref{lem:momentequivalence}). 
    
	As mentioned in the Introduction, Assumptions \ref{assumption1} and \ref{assumption2} play a key role in obtaining certain $L^1$ estimates for the transition densities. The proof of these estimates depends on the regularity properties of the transition densities and on moment bounds for the underlying Lévy process, which we discuss next.
	
	\begin{lemma}{\label{lem:smoothtransitiondensitities}} Under Assumption \ref{assumption1}, the L\'evy process $\{X_{t}\}_{t \geqslant0}$ admits a family of transition functions $p_{t}: \R^n \to \R^n$ for $t \geqslant0$. Moreover, for every $r >0$ and $\theta \in [0,1]$, $p_{\cdot} \in \bigcap_{k=0}^{\infty} \bigcap_{\alpha \in \Z_{\geqslant0}^{n+1} , |\alpha| = k} \mathcal{C}^{k,\theta}([r, \infty) \times \R^n)$
		
		\begin{proof}
			Fix $r>0$ and $\theta \in [0,1]$. For a multi-index $\alpha = (\alpha_0, \alpha_1, \ldots \alpha_n) \in \Z_{\geqslant0}^{n+1}$ with $|\alpha| = k$, let $\partial^{\alpha} \coloneqq \partial_{t}^{\alpha_0}\partial_{x_1}^{\alpha_1} \cdots \partial_{x_n}^{\alpha_n}$. By Assumption \ref{assumption1}, we know that outside a neighborhood of the origin, say, on the set $\{\|\xi\| > M\}$, the inequality $|e^{-t\Psi(\xi)}| = |e^{-t\repsi(\xi)}| \leqslant e^{-t \normxi^{\Iinf}}$  holds. Together with the continuity of $\Psi$, this implies that
			\begin{equation*}
			\int_{\mathbb{R}^{n}}\left|e^{-t\Psi(\xi)}e^{-\im \langle x, \xi \rangle}\right|d\xi = \int_{\mathbb{R}^{n}}\left|e^{-t\Psi(\xi)}\right|d\xi \leqslant \int_{\{\normxi \leqslant M\}}e^{-t\repsi(\xi)}d\xi+\int_{\{\normxi>M\}}e^{-t\normxi^{\Iinf}}d\xi < \infty \, .
			\end{equation*}
			Hence, by the Fourier inversion theorem, the density of $X_{t}$ can be expressed as
			\begin{equation*}
			p_{t}(x)=\frac{1}{(2\pi)^{n}}\int_{\mathbb{R}^{n}}e^{-t\Psi(\xi)}e^{-\im \langle x, \xi \rangle} \, d\xi
			\end{equation*}
			Moreover, using the growth estimate in equation \eqref{eqn:khinchtineestimate} and the reasoning above, we also obtain
			\begin{align*}
			\int_{\mathbb{R}^{n}}\left|\partial^{\alpha}\left(e^{-t\Psi(\xi)}e^{\im \langle x, \xi \rangle}\right)\right|d\xi & \lesssim \int_{\mathbb{R}^{n}}e^{-t\repsi(\xi)} \normxi^{3k}d\xi \\
			&=\int_{\{ \normxi \leqslant  M\}}e^{-t\repsi(\xi)} \normxi^{3k}d\xi + \int_{\{\normxi> M\}}e^{-t\normxi^{\Iinf}} \normxi^{3k}d\xi < \infty \\
			& \lesssim 1 + \frac{1}{t^{\frac{3k+1}{\Iinf}}}\int_{\R^n}e^{-\normxi^{\Iinf}}\normxi^{3k} \, d\xi
			\end{align*} 
		    The right-hand side is uniformly bounded for all $t \geqslant r$ and $x \in \mathbb{R}^{n}$. Therefore, by the Dominated Convergence Theorem, we obtain 
		    \begin{equation*}
		    	\sup_{t \geqslant r}\sup_{x \in \R^n} | \partial^{\alpha}p_{t}(x)| < \infty.
		    \end{equation*}
	    	We now turn to the H\"older continuity of the transition functions. By the triangle inequality, 
	    	\begin{equation*}
	    		 |\partial^{\alpha}p_{t+\varepsilon}(x + h) - \partial^{\alpha}p_{t}(x)| \leqslant | \partial^{\alpha}p_{t}(x + h) - \partial^{\alpha}p_{t}(x)| + | \partial^{\alpha}p_{t+\varepsilon}(x) - \partial^{\alpha}p_{t}(x)|.
	    	\end{equation*}
    	    We estimate each term individually. For the first term, 
			\begin{equation*}
					| \partial^{\alpha}p_{t}(x+h)-\partial^{\alpha}p_{t}(x)|   \lesssim \int_{\R^n}e^{-t\repsi(\xi)}\|\xi \|^{k}|e^{\im \langle h, \xi \rangle} - 1 | \, d\xi \lesssim \|h\|^{\theta} \int_{\R^n}e^{-t\repsi(\xi)}\|\xi \|^{k + \theta} \, d\xi
			\end{equation*}
		    As before, we split the integrand over $\{\|\xi\| \leqslant M\}$ and $\{\|\xi\| > M\}$ and use Assumption \ref{assumption1} to obtain
			\begin{align}
				| \partial^{\alpha}p_{t}(x+h)-\partial^{\alpha}p_{t}(x)|    
				&  \lesssim \|h\|^{\theta}\int_{\{\normxi \leqslant M\}}e^{-t\repsi(\xi)}\normxi^{k +\theta}  \,  d\xi  + \|h\|^{\theta}\int_{\{\normxi > M\}}e^{-t\normxi^{\Iinf}}\normxi^{k + \theta} \, d\xi  \nonumber \\
				& \lesssim 1 + \frac{\|h\|^{\theta}}{t^\frac{\theta+k+1}{\Iinf}} \int_{\mathbb{R}^{n}}e^{-\normxi^{\Iinf}} \normxi^{k + \theta} \, d\xi \nonumber \\
				& \lesssim \frac{\|h\|^\theta}{t^{\frac{\theta+k+1}{\Iinf}}}. \label{eqn:spatial_sup_bound}
		   \end{align}
	       Similarly, we use  the inequality $|e^{- \varepsilon \repsi(\xi)} - 1| \leqslant \varepsilon^{\theta}|\Psi(\xi)|^{\theta}$ along with equation \eqref{eqn:khinchtineestimate} to derive
	       \begin{align}
	       	| \partial^{\alpha}p_{t+\varepsilon}(x)-\partial^{\alpha}p_{t}(x)|    
	       	&  \lesssim |\varepsilon|^{\theta}\int_{\{\normxi \leqslant M\}}e^{-t\repsi(\xi)}\normxi^{k}  \,  d\xi  + |\varepsilon|^{\theta}\int_{\{\normxi > M\}}e^{-t\normxi^{\Iinf}}\normxi^{k + 2\theta} \, d\xi \nonumber \\
	       	& \lesssim 1 + \frac{|\varepsilon|^{\theta}}{t^\frac{2\theta+k+1}{\Iinf}} \int_{\mathbb{R}^{n}}e^{-\normxi^{\Iinf}} \normxi^{k + 2\theta} \, d\xi \nonumber \\
	       	& \lesssim \frac{|\varepsilon|^\theta}{t^{\frac{2\theta+k+1}{\mathrm{\Iinf}}}}. {\label{eqn:temporal_sup_bound}}
	       \end{align}
           Putting the two inequalities together, we see that
           \begin{equation*}
           	\sup_{\substack{t \geqslant r \\ \varepsilon \neq 0}} \sup_{\substack{x \in \R^n \\ h \neq 0}}  \frac{|\partial^{\alpha}p_{t+\varepsilon}(x + h) - \partial^{\alpha}p_{t}(x)|}{|\varepsilon|^{\theta} + \|h\|^{\theta}} < \infty
           \end{equation*}
           This completes the proof of the Lemma.
	     \end{proof}	
	\end{lemma}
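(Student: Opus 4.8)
The plan is to realize $p_t$ as the inverse Fourier transform of $\xi\mapsto e^{-t\Psi(\xi)}$ and then to differentiate under the integral sign, with Assumption~\ref{assumption1} supplying the uniform-in-$(t,x)$ integrable domination that makes this legitimate. The first step is to extract from Assumption~\ref{assumption1} a quantitative tail bound: since $\liminf_{\normxi\to\infty}\frac{\log\repsi(\xi)}{\log\normxi}=\Iinf>0$, for each $\mathsf{J}\in(0,\Iinf)$ there is $M=M(\mathsf{J})$ with $\repsi(\xi)\geqslant\normxi^{\mathsf{J}}$ whenever $\normxi>M$ (the displayed inequality in the statement should really be read with such a $\mathsf{J}$ in place of $\Iinf$; this is harmless, since any $\mathsf{J}>0$ already yields stretched-exponential decay). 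Together with the continuity of $\Psi$ and $\repsi\geqslant0$, this gives $\int_{\R^n}e^{-t\repsi(\xi)}(1+\normxi^2)^k\,d\xi<\infty$ for every $t>0$ and every $k$: split over $\{\normxi\leqslant M\}$, where the integrand is bounded, and over $\{\normxi>M\}$, where it is dominated by $e^{-t\normxi^{\mathsf{J}}}(1+\normxi^2)^k$; the substitution $\xi\mapsto t^{-1/\mathsf{J}}\xi$ converts the latter into $t^{-(2k+n)/\mathsf{J}}$ times a finite $t$-independent constant, so the whole expression is $\lesssim 1+t^{-(2k+n)/\mathsf{J}}$, uniformly bounded for $t\geqslant r$.

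With $k=0$ this shows $e^{-t\Psi(\cdot)}\in L^1(\R^n)$, so Fourier inversion yields the stated formula $p_t(x)=(2\pi)^{-n}\int_{\R^n}e^{-t\Psi(\xi)}e^{-\im\langle x,\xi\rangle}\,d\xi$ and identifies $p_t$ as a genuine transition density. For $\alpha\in\Z_{\geqslant0}^{n+1}$ with $|\alpha|=k$, applying $\partial^\alpha$ to the integrand brings down $(-\Psi(\xi))^{\alpha_0}\prod_{i\geqslant1}(-\im\xi_i)^{\alpha_i}$, of modulus $\lesssim(1+\normxi^2)^k$ by the growth estimate~\eqref{eqn:khinchtineestimate}; hence $|\partial^\alpha(e^{-t\Psi(\xi)}e^{-\im\langle x,\xi\rangle})|\lesssim e^{-t\repsi(\xi)}(1+\normxi^2)^k$, an integrable majorant that is uniform in $x\in\R^n$ and in $t\geqslant r$ by the previous paragraph. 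Dominated convergence then justifies differentiating under the integral to all orders, giving $p_\cdot\in\mathcal{C}^\infty([r,\infty)\times\R^n)$ with $\sup_{t\geqslant r}\sup_{x\in\R^n}|\partial^\alpha p_t(x)|<\infty$.

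It remains to bound the $\theta$-Hölder seminorms of the $\partial^\alpha p_t$ on $[r,\infty)\times\R^n$. Using the triangle split $\partial^\alpha p_{t+\varepsilon}(x+h)-\partial^\alpha p_t(x)=\big(\partial^\alpha p_t(x+h)-\partial^\alpha p_t(x)\big)+\big(\partial^\alpha p_{t+\varepsilon}(x)-\partial^\alpha p_t(x)\big)$ and the Fourier representation, the spatial increment replaces the factor $e^{-\im\langle x,\xi\rangle}$ by $e^{-\im\langle x,\xi\rangle}(e^{-\im\langle h,\xi\rangle}-1)$, with $|e^{-\im\langle h,\xi\rangle}-1|\leqslant 2\wedge(\normxi\|h\|)\leqslant 2\|h\|^\theta\normxi^\theta$; the temporal increment (take $\varepsilon>0$, else relabel $t\leftrightarrow t+\varepsilon$) replaces $e^{-t\Psi(\xi)}$ by $e^{-t\Psi(\xi)}(e^{-\varepsilon\Psi(\xi)}-1)$, with $|e^{-\varepsilon\Psi(\xi)}-1|\leqslant 2|\varepsilon\Psi(\xi)|^\theta\lesssim|\varepsilon|^\theta(1+\normxi^2)^\theta$ by~\eqref{eqn:khinchtineestimate}. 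Here one uses the elementary interpolation bounds $|e^{\im y}-1|\leqslant 2|y|^\theta$ for real $y$ and $|e^{-z}-1|\leqslant 2|z|^\theta$ for $\mathrm{Re}\,z\geqslant0$, valid for any $\theta\in[0,1]$. In each case the interpolation costs only an extra polynomial factor in $\normxi$, which the weighted integrability above absorbs with a bound uniform for $t\geqslant r$; this gives $\sup_{t\geqslant r,\,\varepsilon\neq0}\sup_{x\in\R^n,\,h\neq0}\frac{|\partial^\alpha p_{t+\varepsilon}(x+h)-\partial^\alpha p_t(x)|}{|\varepsilon|^\theta+\|h\|^\theta}<\infty$, i.e.\ $p_\cdot\in\mathcal{C}^{k,\theta}([r,\infty)\times\R^n)$ for all $k$ and all $\alpha$ with $|\alpha|=k$.

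The crux — and the only place Assumption~\ref{assumption1} enters — is the integrable, $(t,x)$-uniform domination of all the $\xi$-derivatives and their increments: the stretched-exponential tail $e^{-t\normxi^{\mathsf{J}}}$ with $\mathsf{J}>0$ defeats every polynomial weight produced by differentiation and by the Hölder interpolation inequalities, while near the origin $\Psi$ is merely bounded by continuity. I expect the only mild care needed is the $\liminf$-versus-inequality point noted above; Assumption~\ref{assumption2} is not used here, entering only later for the moment estimates.
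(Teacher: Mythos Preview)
Your proposal is correct and follows essentially the same route as the paper: Fourier inversion via the $L^1$ bound on $e^{-t\Psi}$, differentiation under the integral using the polynomial-times-stretched-exponential majorant, and the same triangle split plus the interpolation inequalities $|e^{\im y}-1|\lesssim|y|^\theta$ and $|e^{-z}-1|\lesssim|z|^\theta$ for the H\"older seminorms. Your remark that Assumption~\ref{assumption1}, being a $\liminf$, really only gives $\repsi(\xi)\geqslant\normxi^{\mathsf{J}}$ for each $\mathsf{J}<\Iinf$ (rather than for $\Iinf$ itself) is a valid and careful point that the paper glosses over; it changes nothing in the argument, as you note.
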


    We now turn to moment estimates for the process $\{X_{t}\}_{t \geqslant 0}$ . Sato\cite{ken1999levy} presents sufficient and necessary condition for the existence of moments for a L\'evy process in terms of the L\'evy measure. For our purpose, it is convenient to derive equivalent conditions in terms of the characteristic exponent. 
 
 \begin{lemma}{\label{lem:momentequivalence}}
 	Under Assumption \ref{assumption2}, the following are equivalent:
 	\begin{enumerate}
 		\item $\mathbf{E}(\|X_{t}\|^{\Izero}) < \infty$ ; \\ \label{moment1}
 		\item $\int_{\{\|x\|>1\}}\|x\|^{\Izero}\,\jump(dx) < \infty$ ;\\ \label{moment2}
 		\item $\int_{\{\normxi \leqslant 1\}}\frac{\repsi(\xi)}{\normxi^{n+\Izero}}\,d\xi < \infty$ ;\\ \label{moment3}
 		\item $\int_{\{\normxi \leqslant 1\}}\frac{|\Psi(\xi)|}{\normxi^{n+\Izero}} \,d\xi < \infty$ . \\  \label{moment4}
 	\end{enumerate}
 \end{lemma}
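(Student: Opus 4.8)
The plan is to prove the chain of equivalences by exploiting classical Lévy process facts together with the Lévy–Khintchine representation. The equivalence $\eqref{moment1} \Leftrightarrow \eqref{moment2}$ is the content of Sato's moment criterion (\cite{ken1999levy}, Theorem 25.3): for $q>0$, $\E(\|X_t\|^q)<\infty$ for one/all $t>0$ if and only if $\int_{\{\|x\|>1\}}\|x\|^q\,\jump(dx)<\infty$. So I would simply cite that and focus the work on relating these to the behavior of $\repsi$ and $|\Psi|$ near the origin, i.e.\ on $\eqref{moment2}\Leftrightarrow\eqref{moment3}$ and $\eqref{moment3}\Leftrightarrow\eqref{moment4}$.

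For $\eqref{moment3}\Leftrightarrow\eqref{moment4}$: one direction is immediate since $\repsi(\xi)\leqslant|\Psi(\xi)|$, so $\eqref{moment4}\Rightarrow\eqref{moment3}$. The reverse is exactly where Assumption \ref{assumption2} enters: it asserts $\int_{\{\normxi\leqslant1\}}\frac{|\Psi(\xi)|}{\normxi^{n+\Izero}}\,d\xi<\infty$, which is literally statement \eqref{moment4}. Hence under Assumption \ref{assumption2} both \eqref{moment3} and \eqref{moment4} hold automatically, and the "equivalence'' among the last three is really the assertion that \eqref{moment2} holds too — so the real content is $\eqref{moment4}\Rightarrow\eqref{moment2}$ (the converse $\eqref{moment2}\Rightarrow\eqref{moment4}$ then follows, or is again subsumed by the hypothesis). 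Let me restructure: the substantive implications to prove are $\eqref{moment2}\Leftrightarrow\eqref{moment3}$, and then $\eqref{moment3}\Leftrightarrow\eqref{moment4}$ with the nontrivial direction handled by Assumption \ref{assumption2}.

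The heart of the matter is $\eqref{moment2}\Leftrightarrow\eqref{moment3}$, connecting the tail of the Lévy measure to the small-$\xi$ growth of $\repsi$. From the Lévy–Khintchine formula, $\repsi(\xi)=\tfrac12\langle\xi,\gauss\xi\rangle+\int_{\R^n}(1-\cos\langle\xi,x\rangle)\,\jump(dx)$. The Gaussian part contributes $O(\normxi^2)$, hence $\int_{\{\normxi\leqslant1\}}\normxi^{-n-\Izero}\langle\xi,\gauss\xi\rangle\,d\xi<\infty$ since $\Izero<1<2$, so it is harmless. For the jump part I would use Fubini:
\begin{equation*}
\int_{\{\normxi\leqslant1\}}\frac{\int_{\R^n}(1-\cos\langle\xi,x\rangle)\,\jump(dx)}{\normxi^{n+\Izero}}\,d\xi=\int_{\R^n}\left(\int_{\{\normxi\leqslant1\}}\frac{1-\cos\langle\xi,x\rangle}{\normxi^{n+\Izero}}\,d\xi\right)\jump(dx).
\end{equation*}
The inner integral $g(x):=\int_{\{\normxi\leqslant1\}}\frac{1-\cos\langle\xi,x\rangle}{\normxi^{n+\Izero}}\,d\xi$ should satisfy two-sided bounds: $g(x)\asymp\|x\|^2$ for $\|x\|\leqslant1$ (using $1-\cos\theta\asymp\theta^2$ for small $\theta$, and $\int_{\{\normxi\leqslant1\}}\normxi^{2-n-\Izero}\,d\xi<\infty$), and $g(x)\asymp\|x\|^{\Izero}$ for $\|x\|>1$ (by the substitution $\zeta=\|x\|\xi$, the integral becomes $\|x\|^{\Izero}\int_{\{\normzeta\leqslant\|x\|\}}\frac{1-\cos\langle e,\zeta\rangle}{\normzeta^{n+\Izero}}\,d\zeta$ with $e=x/\|x\|$, and the $\zeta$-integral converges as $\|x\|\to\infty$ since $\Izero\in(0,1)$ controls the tail and the numerator vanishes like $\normzeta^2$ near $0$). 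Combining, $g(x)\asymp 1\wedge\|x\|^2$ plus a $\|x\|^{\Izero}\mathbf{1}_{\{\|x\|>1\}}$ term — more carefully, $g(x)\asymp (\|x\|^2\wedge 1)$ for the small-$x$ regime and $g(x)\asymp\|x\|^{\Izero}$ for large $x$. Since $\int_{\R^n}(1\wedge\|x\|^2)\,\jump(dx)<\infty$ always holds, the finiteness of $\int g\,d\jump$ is equivalent to $\int_{\{\|x\|>1\}}\|x\|^{\Izero}\,\jump(dx)<\infty$, which is \eqref{moment2}. The real part $\repsi$ differs from $|\Psi|$, but since $\repsi\leqslant|\Psi|$ and Assumption \ref{assumption2} gives the upper bound on $\int|\Psi|\normxi^{-n-\Izero}$, all four conditions close up.

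The main obstacle I anticipate is making the asymptotics of $g(x)$ precise and uniform — in particular establishing the clean lower bound $g(x)\gtrsim\|x\|^{\Izero}$ for large $\|x\|$, since $1-\cos\langle e,\zeta\rangle$ oscillates and one must argue that its average over the relevant region of $\zeta$ is bounded below (e.g.\ by restricting to a cone where $\langle e,\zeta\rangle$ stays in a band away from multiples of $2\pi$, or by a standard computation showing $\int_{\{1\leqslant\normzeta\leqslant R\}}(1-\cos\langle e,\zeta\rangle)\normzeta^{-n-\Izero}\,d\zeta$ converges to a strictly positive constant as $R\to\infty$). The rest is routine Fubini and scaling.
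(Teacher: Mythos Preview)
Your plan is correct and, for the core implication $\eqref{moment2}\Leftrightarrow\eqref{moment3}$, matches the paper closely: the paper also applies Fubini to isolate the inner integral $\int_{\{\normxi\leqslant1\}}\frac{1-\cos\langle\xi,x\rangle}{\normxi^{n+\Izero}}\,d\xi$, uses the substitution $\xi\mapsto\xi/\|x\|$ to extract the $\|x\|^{\Izero}$ scaling for the lower bound, and handles the upper bound by the splits $\{\|x\|\leqslant1\}$, $\{\|x\|>1,\ \normxi\|x\|\leqslant1\}$, $\{\|x\|>1,\ \normxi\|x\|>1\}$. Your anticipated obstacle (positivity of the limiting $\zeta$-integral) is dispatched in the paper simply by bounding that integral below, for $\|x\|>1$, by the fixed quantity $\int_{\{\normxi\leqslant1\}}\frac{1-\cos\langle\xi,x/\|x\|\rangle}{\normxi^{n+\Izero}}\,d\xi$, which is finite and strictly positive.

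The one genuine difference is your treatment of $\eqref{moment3}\Leftrightarrow\eqref{moment4}$. You observe (correctly) that Assumption~\ref{assumption2} is literally condition \eqref{moment4}, so under the hypothesis \eqref{moment4} and hence \eqref{moment3} hold for free, and the only substantive work is $\eqref{moment3}\Rightarrow\eqref{moment2}$. The paper does not exploit this shortcut: it proves $\eqref{moment2}\Rightarrow\eqref{moment4}$ directly by writing out $\impsi(\xi)$ from the L\'evy--Khintchine formula, bounding the drift and small-jump terms by $\normxi$ and $\normxi^{3}$, and controlling the large-jump term $\int_{\{\|x\|>1\}}\sin\langle\xi,x\rangle\,\jump(dx)$ via $(1\wedge\normxi\|x\|)$ and a polar-coordinate computation. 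This buys the paper an unconditional equivalence (valid for any $\Izero\in(0,1)$, not just the one furnished by Assumption~\ref{assumption2}), whereas your route is shorter but yields only the statement as written.
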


 \begin{proof}
 	The implication $\eqref{moment1} \iff \eqref{moment2}$ follows from \cite[p.~159]{ken1999levy}. For $\eqref{moment2} \implies \eqref{moment3}$, observe that 
 	\begin{equation*}
 	\operatorname{Re} \Psi(\xi)=\frac{1}{2}\langle\xi, \gauss \xi \rangle
 	+\int_{\R^n}\left(1-\cos (\langle\xi, x \rangle) \right) \,\jump(dx) \, .
 	\end{equation*}
 	Since $\gauss$ is symmetric and positive-definite, there exists a largest non-negative eigenvalue $\operatorname{\lambda}_{\operatorname{max}}$ such that $\langle \xi, \gauss\xi \rangle \leqslant \operatorname{\lambda}_{\operatorname{max}}\normxi^{2} \,\, \forall \,\, \xi \in \R^n$.
 	Also, by Taylor's formula, we have
 	\begin{equation*}
 	1-\cos (\langle \xi, x \rangle) < 2\left(1 \wedge\|\xi\|^2\|x\|^2 \right) \quad \forall \xi, x \in \R^n \,
 	\end{equation*}
 	Hence,
 	\begin{equation*}
 	\int_{\{\|\xi\| \leqslant 1\}} \frac{\repsi(\xi)}{\|\xi\|^{n+\Izero}} \, d\xi	
 	\leqslant \frac{\operatorname{\lambda}_{\operatorname{max}}}{2} \int_{\{\|\xi\| \leqslant 1\}}\|\xi\|^{2-n-\Izero} \, d\xi +\int_{\{\normxi \leqslant 1 \}}\int_{\R^n}\frac{\left(1 \wedge \normxi^{2} \|x\|^{2} \right)}{\normxi^{n+\Izero}} \, \jump(dx) \, d\xi 
 	\end{equation*}
    Split the second term into $\{\|x\|\le 1\}$ and $\{\|x\|>1\}$, and on $\{\|x\|>1\}$ split further by whether $\|\xi\|\|x\|\leqslant 1$ or $>1$. Doing so transforms the right side of the above equation to 
    \begin{align*}
    	\frac{\operatorname{\lambda}_{\operatorname{max}}}{2} 
    	&\int_{\{\|\xi\| \leqslant 1\}}\|\xi\|^{2-n-\Izero} \, d\xi + \int_{\{\|\xi\| \leqslant 1\}}\int_{\{\|x\| \leqslant 1\}}\|\xi\|^{2-n-\Izero}\|x\|^2 \,\jump(dx)\,d\xi \\
    	& \, + \int_{\{\frac{1}{\|x\|} \leqslant \normxi \leqslant 1\}}  \int_{\left\{\|x\|>1 \right\}}\frac{1}{\|\xi\|^{n+\Izero}} \, \jump(dx) \, d\xi  + \int_{\left\{\|\xi\| \leqslant \frac{1}{\|x\|} \right\}} \int_{\{\|x\|>1\}}\|\xi\|^{2-n-\Izero}\|x\|^2 \, \jump(dx) \, d\xi.
    \end{align*}
 	The first three integrals in the above line are finite on account of local integrability in the $\xi$ variable (recall $\Izero-2<0$) and the definition of the L\'{e}vy measure $\nu$. As for the fourth integral, a change to polar coordinates yields
 	\begin{equation*}
 	\int_{\{\|x\|>1\}}\|x\|^{2} \, \jump(dx) \int_{0}^{\frac{1}{\|x\|}} r^{1-\Izero} \, dr= \frac{1}{2-\Izero}\int_{\{\|x\|>1\}}\|x\|^{\Izero} \, \jump(dx) \,  < \infty .
 	\end{equation*}
 	This proves $\eqref{moment2} \implies \eqref{moment3}$. For the implication $\eqref{moment3} \implies \eqref{moment2}$, due to the positive-definiteness of $\gauss$, we have
 	\begin{align*}
 	\int_{\{\|\xi\| \leqslant 1\}}\frac{\repsi(\xi)}{\normxi^{n+\Izero}} \, d\xi & \geqslant\int_{\{\normxi \leqslant 1\}}\int_{\R^n}\frac{1-\cos(\langle \xi, x \rangle)}{\normxi^{n+\Izero}} \, \jump(dx) \, d\xi \\
 	&=\int_{\R^n}\|x\|^{\Izero}\jump(dx)\int_{\{\|\xi\|\leqslant \|x\|\}}\frac{1-\cos \left( \langle \xi,\frac{x}{\|x\|}\rangle \right)}{\normxi^{n+\Izero}} \, d\xi \, ,
 	\end{align*}
 	where, in the equality, we have performed a change of variables from $\xi \to \frac{\xi}{\|x\|}$. For $\|x\| > 1$, the inner integral above can be bounded below by a constant 
 	\begin{equation*}
 		\mathcal{C} \coloneqq \int_{\{\normxi \leqslant 1\}}\frac{1-\cos(\langle \xi,\frac{x}{\|x\|} \rangle)}{\|\xi\|^{n+\Izero}} \, d\xi \lesssim \int_{\{\normxi \leqslant 1\}}\frac{1}{\|\xi\|^{n+\Izero-2}} \, d\xi < \infty \, .
 	\end{equation*}
 	Thus, 
 	\begin{align*}
 		\mathcal{C}\int_{\{\|x\|>1\}}\|x\|^{\Izero} \, \jump(dx) 
 		& = \int_{\{\|x\|>1\}}\|x\|^{\Izero} \, \jump(dx)\int_{\{\|\xi\|\leqslant 1\}}\frac{1-\cos(\langle \xi,\frac{x}{\|x\|} \rangle)}{\|\xi\|^{n+\Izero}} \, d\xi \\
 		& \leqslant 	\int_{\R^n}\|x\|^{\Izero} \, \jump(dx)\int_{\{\|\xi\|\leqslant \|x\|\}}\frac{1-\cos \big( \langle \xi,\frac{x}{\|x\|}\rangle\big)}{\normxi^{n+\Izero}} \, d\xi <\infty,
 	\end{align*}
    by assumption. This proves $\eqref{moment3} \implies \eqref{moment2}$.
    
   Since \eqref{moment4} $\implies$ \eqref{moment3} follows immediately, the steps we just established also show that \eqref{moment4} $\implies$ \eqref{moment2}. To complete the chain of equivalences, it remains to prove \eqref{moment2} $\implies$ \eqref{moment4}. Because we already showed that \eqref{moment2} $\implies$ \eqref{moment3}, it is enough to check that \eqref{moment2} also implies that
   \begin{equation*}
   	 \int_{\{\normxi \leqslant 1\}}\frac{\impsi(\xi)}{\normxi^{n+\Izero}} \, d\xi < \infty
   \end{equation*}
   To this end, note that
 	\begin{equation*}
 		\impsi(\xi) = \langle \drift, \xi \rangle + \int_{\{ \|x\| \leqslant 1\}}(\langle \xi, x\rangle) - \sin(\langle x, \xi \rangle) \, \nu(dx) +  \int_{\{ \|x\| > 1\}}\sin(\langle \xi, x\rangle) \, \nu(dx)
 	\end{equation*}
    On the region $\{\normxi \leqslant 1\} \times \{ \|x\| \leqslant 1\}$, Taylor's formula gives
    \begin{equation*}
    	|\langle \xi, x\rangle) - \sin(\langle x, \xi \rangle| \lesssim \normxi^3 \|x\|^3
    \end{equation*}
    On the other region $\{\normxi \leqslant 1\} \times \{ \|x\| > 1\}$, we use the elementary bound $\sin(\langle \xi, x \rangle) \lesssim (1 \wedge \normxi \|x\|)$. Thus. 
    \begin{align*}
    	\int_{\{\normxi \leqslant 1\}} \frac{\impsi(\xi)}{\normxi^{n + \Izero}} \, d\xi \leqslant \|\drift\| \int_{\{\normxi \leqslant 1\}}\normxi^{1- n -\Izero} \, d\xi 
    	& + \int_{\{\normxi \leqslant 1\}}\int_{ \{ \|x\| \leqslant 1\}} \normxi^{3-n-\Izero} \|x\|^3 \, \nu(dx) \, d\xi \\
    	& + \int_{\{\normxi \leqslant 1\}}\int_{ \{ \|x\| > 1\}}\frac{(1 \wedge \normxi \|x\|)}{\normxi^{n+\Izero}} \, \nu(dx) \, d\xi
    \end{align*}
    The first two terms are finite by local integrability in the $\xi$ variable (since $\Izero<1$) and the definition of the L\'evy measure. For the last term, up to a constant depending only on $n$, we write in polar coordinates:
    \begin{align*}
    	\int_{ \{ \|x\| > 1 \}} \int_{0}^{1} (1 \wedge r\|x\|)r^{-1-\Izero} \, dr \, \nu(dx) 
    	& = \int_{ \{ \|x\| > 1 \}}  \int_{0}^{\frac{1}{\|x\|}} r\|x\| r^{-1-\Izero} \, dr \, \nu(dx)  \\
     	& \qquad \qquad \qquad + \int_{ \{ \|x\| > 1 \}}  \int_{\frac{1}{\|x\|}}^{1} r^{-1-\Izero} \, dr \, \nu(dx) \\
     	& = \frac{1}{\Izero (1-\Izero)}\int_{\{ \|x\| > 1\}} \|x\|^{\Izero} \, \nu(dx) -\frac{1}{\Izero}\nu(\{ \|x\| > 1\})
    \end{align*}
    Both terms on the right are finite by \eqref{moment2} and the integrability of $\nu$.
    Hence the integral of $\impsi$ is finite, which proves \eqref{moment2} $\implies$ \eqref{moment4}.
    This completes the proof.
	\end{proof}	
    Now we use the above Lemma to obtain a growth estimate for the moments of $X_{t}$ as a function of $t$. Using the stationary and independent increments property of $X_{\cdot}$, it is not hard to show that $\mathbf{E}(\|X_{t}\|^{2}) \lesssim t$, provided the expectation exists. The following lemma provides a precise quantification of this result in our context, where $X_{\cdot}$ need not be square integrable.
	\begin{lemma}{\label{lem:momentgrowth1}}
		There exist $C(\Izero)>1$ such that,
		\begin{equation*}
		\mathbf{E}(\|X_{t}\|^{\Izero}) \leqslant C(\Izero)\int_{\mathbb{R}^{n}}\frac{\left( t|\Psi(\xi)|\wedge 1 \right)}{\normxi^{n+\Izero}} \, d\xi \, .
		\end{equation*}
	\end{lemma}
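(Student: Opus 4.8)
\textbf{Proof proposal.} The plan is to combine the classical integral representation of a fractional power of the Euclidean norm with the Lévy–Khintchine formula $\mathbf{E}(e^{\im\langle\xi,X_t\rangle})=e^{-t\Psi(\xi)}$. Recall that for every $\alpha\in(0,2)$ there is a finite positive constant $c_{n,\alpha}$ with
\begin{equation*}
\|x\|^{\alpha}=c_{n,\alpha}\int_{\R^n}\frac{1-\cos\langle\xi,x\rangle}{\|\xi\|^{n+\alpha}}\,d\xi\qquad(x\in\R^n);
\end{equation*}
this follows from rotational invariance and the scaling $\xi\mapsto\xi/\|x\|$, the constant being finite because $1-\cos\langle\xi,x\rangle\lesssim\|\xi\|^2\wedge 1$ together with $n+\alpha-2<n<n+\alpha$. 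Taking $\alpha=\Izero$, applying $\mathbf{E}$ to both sides, and swapping expectation and integral by Tonelli's theorem (the integrand is nonnegative), one obtains
\begin{equation*}
\mathbf{E}(\|X_t\|^{\Izero})=c_{n,\Izero}\int_{\R^n}\frac{1-\mathbf{E}\cos\langle\xi,X_t\rangle}{\|\xi\|^{n+\Izero}}\,d\xi
=c_{n,\Izero}\int_{\R^n}\frac{1-e^{-t\repsi(\xi)}\cos\!\big(t\,\impsi(\xi)\big)}{\|\xi\|^{n+\Izero}}\,d\xi .
\end{equation*}
Hence it suffices to show the numerator is $\lesssim t|\Psi(\xi)|\wedge 1$ pointwise in $\xi$.

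For this, split
$1-e^{-t\repsi(\xi)}\cos(t\,\impsi(\xi))=\big(1-e^{-t\repsi(\xi)}\big)+e^{-t\repsi(\xi)}\big(1-\cos(t\,\impsi(\xi))\big)$.
Since $\repsi\geqslant 0$, the first term satisfies $0\leqslant 1-e^{-t\repsi(\xi)}\leqslant t\repsi(\xi)\wedge 1\leqslant t|\Psi(\xi)|\wedge 1$, using $\repsi(\xi)\leqslant|\Psi(\xi)|$. For the second term, bound $e^{-t\repsi(\xi)}\leqslant 1$ and use $1-\cos\theta\lesssim\theta^2\wedge 1$, so that
$e^{-t\repsi(\xi)}(1-\cos(t\,\impsi(\xi)))\lesssim (t\,\impsi(\xi))^2\wedge 1\leqslant (t|\Psi(\xi)|)^2\wedge 1$,
where we used $|\impsi(\xi)|\leqslant|\Psi(\xi)|$. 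The elementary inequality $a^2\wedge 1\leqslant a\wedge 1$ for all $a\geqslant 0$ (check $a\leqslant 1$ and $a>1$ separately) then upgrades this to $\lesssim t|\Psi(\xi)|\wedge 1$. Adding the two bounds gives $1-e^{-t\repsi(\xi)}\cos(t\,\impsi(\xi))\lesssim t|\Psi(\xi)|\wedge 1$, and absorbing $c_{n,\Izero}$ and the implied constants into a single $C(\Izero)>1$ yields the claim; note the right-hand side is finite, since near the origin it is controlled by Assumption \ref{assumption2} and near infinity by $n+\Izero>n$, consistently with Lemma \ref{lem:momentequivalence}.

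I do not expect a genuine obstacle. The only two points needing a little care are the justification of the identity for $\|x\|^{\Izero}$ (finiteness of $c_{n,\Izero}$, invariance and scaling) and the passage from the $(t|\Psi(\xi)|)^2\wedge 1$ bound arising from the imaginary part to the desired $t|\Psi(\xi)|\wedge 1$ via $a^2\wedge 1\leqslant a\wedge 1$; everything else is Tonelli together with the Lévy–Khintchine formula.
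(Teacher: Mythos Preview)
Your proof is correct and follows essentially the same route as the paper: both use the integral representation $\|x\|^{\Izero}=c_{n,\Izero}\int_{\R^n}(1-\cos\langle\xi,x\rangle)\|\xi\|^{-n-\Izero}\,d\xi$, take expectation via Tonelli, and then bound the numerator by $t|\Psi(\xi)|\wedge 1$. The only cosmetic difference is that the paper compresses your two-term splitting into the single inequality $1-\operatorname{Re}e^{-t\Psi(\xi)}\leqslant|1-e^{-t\Psi(\xi)}|\lesssim t|\Psi(\xi)|\wedge 1$, whereas you handle the real and imaginary parts of $\Psi$ separately; your version is arguably more transparent.
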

	\begin{proof}
		Recall as in proof of Lemma \ref{lem:momentequivalence},
		\begin{equation*}
		\mathcal{C}= \int_{\mathbb{R}^{n}}\frac{1-\cos(\langle \xi,\frac{x}{\|x\|}\rangle)}{\|\xi\|^{n+\Izero}}\, d\xi < \infty \, .
		\end{equation*}
		By a change of variables $\xi \to \|x\|\xi$,
		\begin{equation*}
		\|x\|^{\Izero}=\frac{1}{\mathcal{C}} \int_{\mathbb{R}^{n}}\frac{1-\cos(\langle \xi,x\rangle)}{\|\xi\|^{n+\Izero}} \, d\xi \quad \forall x \in \mathbb{R}^{n} \, .
		\end{equation*}
		Setting $x=X_{t}$, we take expectation and use the elementary inequality $|1-e^{-t\Psi(\xi)}|\leqslant t|\Psi(\xi)|\wedge 1$ 
		\begin{equation*}
		\mathbf{E}(\|X_{t}\|^{\Izero}) \lesssim \int_{\mathbb{R}^{n}}\frac{\left(t|\Psi(\xi)|\wedge 1 \right)}{\normxi^{n+\Izero}} \, d\xi \, .
		\end{equation*}
	\end{proof}
    \noindent For the proof of the H\"{o}lder estimates, we will need a sharper growth estimate on the moments. This is the content of the next lemma. 
	\begin{lemma} {\label{lem:moment_growth2}}
		For every $\kappa \in (0, \nicefrac{\Izero}{2})$, there exists $C(\kappa)>0$ such that
		\begin{equation*}
		\mathbf{E}\left(\|X_{t}\|^{\Izero}\right) \leqslant C(\kappa)t^{\kappa} \, .
		\end{equation*}
	\end{lemma}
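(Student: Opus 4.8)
The plan is to reduce the statement to a deterministic integral estimate using Lemma~\ref{lem:momentgrowth1}, which already gives $\mathbf{E}(\|X_t\|^{\Izero}) \lesssim \int_{\R^n}\frac{t|\Psi(\xi)|\wedge 1}{\normxi^{n+\Izero}}\,d\xi$. It then suffices to show this integral is $\lesssim t^{\kappa}$ for $t$ ranging over a bounded set (see the last paragraph for why a global bound is impossible). I would split $\R^n$ at the unit sphere into $A=\{\normxi\le 1\}$ and $B=\{\normxi>1\}$ and treat the two regions by genuinely different devices, since the restriction $\kappa<\Izero/2$ turns out to come entirely from the tail $B$.

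On $B$ I would discard the truncation in favour of a fractional power. Since $\Izero<1$ by Assumption~\ref{assumption2}, every $\kappa\in(0,\Izero/2)$ lies in $(0,1)$, so the elementary inequality $a\wedge 1\le a^{\kappa}$ (for $a\ge 0$) yields $t|\Psi(\xi)|\wedge 1\le t^{\kappa}|\Psi(\xi)|^{\kappa}$. Combining this with the L\'evy--Khintchine growth bound \eqref{eqn:khinchtineestimate}, which gives $|\Psi(\xi)|\lesssim\normxi^{2}$ on $B$, reduces the $B$-integral to $t^{\kappa}\int_{\{\normxi>1\}}\normxi^{2\kappa-n-\Izero}\,d\xi$; in polar coordinates this is a dimensional constant times $t^{\kappa}\int_1^{\infty}r^{2\kappa-\Izero-1}\,dr$, which converges precisely when $\kappa<\Izero/2$. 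This is the only place the hypothesis on $\kappa$ is used, and the resulting bound $\lesssim t^{\kappa}$ holds for all $t>0$.

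On $A$ the same manoeuvre fails: raising $|\Psi|$ to a power $\kappa<1$ near the origin only worsens integrability, and $\int_{\{\normxi\le 1\}}|\Psi(\xi)|^{\kappa}\normxi^{-n-\Izero}\,d\xi$ can genuinely diverge (e.g.\ for a stable exponent $\normxi^{\alpha}$ with $\alpha$ slightly larger than $\Izero$). Instead I would keep the full first power, bounding $t|\Psi(\xi)|\wedge 1\le t|\Psi(\xi)|$, so that $\int_A \le t\int_{\{\normxi\le 1\}}\frac{|\Psi(\xi)|}{\normxi^{n+\Izero}}\,d\xi$, and the last integral is finite by Assumption~\ref{assumption2}. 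Hence $\int_A\lesssim t$, and since $\kappa<1$ we have $t\le T^{1-\kappa}t^{\kappa}$ whenever $t\le T$, so $\int_A\lesssim t^{\kappa}$ on any bounded time interval. Adding the two contributions and inserting back into Lemma~\ref{lem:momentgrowth1} gives $\mathbf{E}(\|X_t\|^{\Izero})\le C(\kappa)t^{\kappa}$.

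The point I would flag as the real (if modest) obstacle is that the estimate cannot hold for \emph{all} $t>0$: for a strictly stable process of index $\alpha\le 2$ the scaling relation $\mathbf{E}(\|X_t\|^{\Izero})=t^{\Izero/\alpha}\mathbf{E}(\|X_1\|^{\Izero})$ together with $\Izero/\alpha\ge\Izero/2>\kappa$ rules it out. So the statement has to be read --- as is standard in Kolmogorov-continuity arguments, where $t$ ranges over a compact set --- with $C(\kappa)$ permitted to depend on an upper bound for $t$; with that understood, the $A$-estimate is immediate from Assumption~\ref{assumption2} and the $B$-estimate is a one-line polar-coordinate computation, so nothing else in the argument is delicate.
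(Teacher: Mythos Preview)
Your proof is correct and, if anything, more direct than the paper's. Both arguments start from Lemma~\ref{lem:momentgrowth1} and split the $\xi$-integral into a region near the origin (handled by Assumption~\ref{assumption2}) and a tail region (handled via $|\Psi(\xi)|\lesssim\normxi^{2}$ and the condition $\kappa<\Izero/2$). The paper, however, splits at the level set $\{|\Psi(\xi)|\le 1\}$ rather than at the unit sphere, and instead of bounding the integral directly it shows $\int_0^1 \mathcal{I}(t)\,t^{-1-\kappa}\,dt<\infty$ for the nondecreasing envelope $\mathcal{I}(t)=\sup_{s\le t}\mathbf{E}(\|X_s\|^{\Izero})$ and then invokes an external monotonicity lemma (Lemma~A.3 of \cite{khoshnevisan2022optimal}) to extract the pointwise bound $\mathcal{I}(t)\lesssim t^{\kappa}$. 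Your route avoids that detour entirely: the single inequality $a\wedge 1\le a^{\kappa}$ on the tail and the trivial $a\wedge 1\le a$ near the origin give the $t^{\kappa}$ bound in one step. Your observation that the estimate can only hold for $t$ in a bounded interval is correct and matches the paper's implicit conclusion (the integral test runs over $[0,1]$, and the lemma is only ever applied in Lemma~\ref{lem:L1_bounds} with $t\in(0,T)$).
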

	\begin{proof}
		Define $\mathcal{I}(t) \coloneqq \sup_{0 \leqslant s \leqslant t}\mathbf{E}\left(\|X_{s}\|^{\Izero}\right)$. From the previous Lemma, we know that
		\begin{align*}
		\int_{0}^{1}\frac{\mathcal{I}(t)}{t^{1+ \kappa}} \, dt & \lesssim \int_{0}^{1}\frac{dt}{t^{1+\kappa}}\int_{\mathbb{R}^{n}}\frac{(t|\Psi(\xi)| \wedge 1)}{\normxi^{n+\Izero}} \, d\xi \\
		&= \int_{0}^{1}\int_{\{\xi: |\Psi(\xi)| \leqslant 1\}}\frac{(t|\Psi(\xi)| \wedge 1)}{t^{1+\kappa}\normxi^{n+\Izero}} \,d\xi \, dt+\int_{0}^{1}\int_{\{\xi: |\Psi(\xi)| > 1\}}\frac{(t|\Psi(\xi)| \wedge 1)}{t^{1+\kappa}\normxi^{n+\Izero}} \, d\xi \, dt \, .
		\end{align*}
	    On the set $\{\xi: |\Psi(\xi)| \leqslant 1\}$, which is bounded and contains the origin by Assumption~\ref{assumption1}, we also have $t|\Psi(\xi)| < t < 1$. Therefore, by Assumption \ref{assumption2}, the first integral is finite;
		\begin{equation*}
		\int_{0}^{1}\int_{\{\xi: |\Psi(\xi)| \leqslant 1\}}\frac{(t|\Psi(\xi)| \wedge 1)}{t^{1+\kappa}\normxi^{n+\Izero}} \, d\xi \, dt=\frac{1}{1-\kappa} \int_{\{\xi: |\Psi(\xi)| \leqslant 1\}}\frac{|\Psi(\xi)|}{\normxi^{n+\Izero}} \, d\xi < \infty \, .
		\end{equation*}
		As for the other integral, we split the $dt$ integration according to whether $t|\Psi(\xi)| \leqslant 1$ or $>1$. Also, note that by Assumption \ref{assumption1},  the inclusion $\{\xi: |\Psi(\xi)|>1\} \subset \{\xi: \normxi \geqslant M \}$ holds for some large $M \gg 1$. Therefore, 
		\begin{align*}
		\int_{0}^{1}\int_{\{\xi: |\Psi(\xi)| > 1\}}\frac{(t|\Psi(\xi)| \wedge 1)}{t^{1+\kappa}\normxi^{n+\Izero}} \,  d\xi \, dt 
		& =\int_{\{\xi: |\Psi(\xi)| > 1\}}\int_{0}^{\frac{1}{|\Psi(\xi)|}}\frac{(t|\Psi(\xi)| \wedge 1)}{t^{1+\kappa}\normxi^{n+\Izero}} \, d\xi \,  dt \\
	    & \qquad \qquad \qquad +\int_{\{\xi: |\Psi(\xi)| > 1\}}\int_{\frac{1}{|\Psi(\xi)|}}^{1}\frac{(t|\Psi(\xi)| \wedge 1)}{t^{1+\kappa}\normxi^{n+\Izero}} \, d\xi \, dt  \\
		& \leqslant \frac{1}{1- \kappa}\int_{\{\normxi \geqslant M\}}\frac{|\Psi(\xi)|^{\kappa}}{\normxi^{n+\Izero}} \, d\xi+\frac{1}{\kappa}\int_{\{\normxi \geqslant M\}}\frac{|\Psi(\xi)|^{\kappa}}{\normxi^{n+\Izero}} \, d\xi \\
		& \qquad \qquad \qquad  \qquad -\frac{1}{\kappa}\int_{\{\normxi \geqslant M\}}\frac{d\xi}{\normxi^{n+\Izero}}.
		\end{align*}
	    The third integral is clearly finite. On the set $\{\xi: \normxi \geqslant M \}$, we have $|\Psi(\xi)| \lesssim \normxi^{2}$. Hence, the first two integrals are bounded above by
	    \begin{equation*}
	    		 \frac{1}{\kappa(1-\kappa)}\int_{\{\normxi > M\}}\frac{d\xi}{\normxi^{n+\Izero-2\kappa}}
	    \end{equation*}
		Therefore $\int_{0}^{1}\frac{\mathcal{I}(t)}{t^{1+\kappa}} \, dt <\infty$ if $\Izero-2\kappa > 0 $ i.e., when $\kappa \in (0, \nicefrac{\Izero}{2})$. As $\mathcal{I}(t)$ is non-decreasing in $t$, we can use Lemma A.3 of \cite{khoshnevisan2022optimal} to conclude that $\mathbf{E}\left( \|X_{t}\|^{\Izero}\right) \lesssim t^{\kappa}$ for $\kappa \in (0, \nicefrac{\Izero}{2})$.
	\end{proof}

%%%%%%%%%%%%%%%%%  Lemma of L1-bounds   %%%%%%%%%%%%%%%%%%%%%%%%%%%%
    \noindent Using the Lemmas proved above, we state and prove the main result of this section: the $L^1$ bounds on the transition densities 

	\begin{lemma}{\label{lem:L1_bounds}} Fix $T>0$. For all $\theta, \tau, \omega \in (0,1)$ and $t \in (0,T)$, we have
		\begin{equation}{\label{eqn:L1_bounds_space}}
		\int_{\mathbb{R}^{n}}|p_{t}(x+h)-p_{t}(x)| \, dx \lesssim \left(\frac{\|h\|^{\theta}}{t^{\frac{\theta+n}{\Iinf}+\kappa}}\right)^{\frac{\tau \Izero}{n+\Izero}} \, ,
		\end{equation}
		\begin{equation}{\label{eqn:L1_bounds_time}}
		\int_{\mathbb{R}^{n}}|p_{t+\varepsilon}(x)-p_{t}(x)| \, dx \lesssim \left(\frac{\varepsilon^{\theta}}{t^{\frac{2\theta+n}{\Iinf}+\kappa}}\right)^{\frac{\omega \Izero}{n+ \Izero}} \, ,
		\end{equation}
		uniformly for all $h \in \R^n$ such that $\|h\| \leqslant 1$ and $\varepsilon>0$.
	\end{lemma}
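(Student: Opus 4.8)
The plan is to interpolate between an $L^{\infty}$ bound on the increments of $p_t$ and an $L^1$ bound on the \emph{total mass} of $p_t$, converting the resulting $L^1$ estimate on the increment into a power of a small quantity. The starting point for the spatial bound \eqref{eqn:L1_bounds_space} is the observation that for any $R>0$,
\begin{equation*}
\int_{\R^n}|p_t(x+h)-p_t(x)|\,dx = \int_{\{\|x\|\leqslant R\}} + \int_{\{\|x\|> R\}} \, .
\end{equation*}
On the ball $\{\|x\|\leqslant R\}$ I would bound the integrand pointwise by the $L^{\infty}$ estimate from Lemma~\ref{lem:smoothtransitiondensitities} (the case $k=0$ of \eqref{eqn:spatial_sup_bound}), which gives a contribution of order $R^n \cdot \|h\|^{\theta} t^{-(\theta+n)/\Iinf}$ — note I need the sharper form $t^{-(\theta+n)/\Iinf}$ rather than $t^{-(\theta+1)/\Iinf}$, which comes from scaling all $n$ variables in the Fourier integral, so the stated exponent $\tfrac{\theta+n}{\Iinf}$ in \eqref{eqn:L1_bounds_space} is exactly what that calculation produces (the extra $+\kappa$ will appear from the tail term). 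On the complement $\{\|x\|>R\}$ I use the crude bound $|p_t(x+h)-p_t(x)|\leqslant p_t(x+h)+p_t(x)$ and Markov's inequality: since $p_t$ is the density of $X_t$,
\begin{equation*}
\int_{\{\|x\|> R\}} p_t(x)\,dx = \P(\|X_t\| > R) \leqslant \frac{\E(\|X_t\|^{\Izero})}{R^{\Izero}} \lesssim \frac{t^{\kappa}}{R^{\Izero}}
\end{equation*}
by Lemma~\ref{lem:moment_growth2}, and the shifted term is handled the same way after absorbing the shift $\|h\|\leqslant 1$ into a slightly larger $R$.

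The upshot is a bound of the form $A R^n + B R^{-\Izero}$ with $A \asymp \|h\|^{\theta} t^{-(\theta+n)/\Iinf}$ and $B \asymp t^{\kappa}$; optimizing over $R$ (taking $R^{n+\Izero} \asymp B/A$) yields something of order $A^{\Izero/(n+\Izero)} B^{n/(n+\Izero)}$. To land on the exponent $\tfrac{\tau\Izero}{n+\Izero}$ with an arbitrary $\tau\in(0,1)$ in the statement, rather than the bare $\tfrac{\Izero}{n+\Izero}$, I would not optimize sharply but instead choose $R$ so that $AR^n \asymp (A^{\Izero/(n+\Izero)})^{\tau}$ up to the $B$-factor, or equivalently use the trivial bound $\int|p_t(x+h)-p_t(x)|\,dx \leqslant 2$ together with the interpolation inequality $\min(a,2)\cdot a^{\text{stuff}} \lesssim a^{\tau\cdot\text{stuff}}$ valid whenever the quantity being raised is $\leqslant 1$; this is where the hypothesis $t\in(0,T)$, $\|h\|\leqslant 1$ and $\Iinf<2$ get used to guarantee the base quantity is controlled. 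The temporal bound \eqref{eqn:L1_bounds_time} is entirely parallel: replace the spatial-increment $L^{\infty}$ estimate by the temporal one \eqref{eqn:temporal_sup_bound} (again in its $k=0$, all-$n$-variables-scaled form, producing the exponent $\tfrac{2\theta+n}{\Iinf}$), use the same Markov/moment tail bound for $\{\|x\|>R\}$, optimize over $R$, and interpolate with the trivial bound $2$ to insert the free parameter $\omega$.

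The main obstacle I anticipate is purely bookkeeping rather than conceptual: carefully tracking the exponent of $t$ through the two competing terms so that the $-\kappa$ loss from Lemma~\ref{lem:moment_growth2} and the $(\theta+n)/\Iinf$ (resp. $(2\theta+n)/\Iinf$) loss from the sup bound combine, after the $R$-optimization, into precisely the exponent displayed inside the parentheses in \eqref{eqn:L1_bounds_space}–\eqref{eqn:L1_bounds_time}, and then verifying that raising to the power $\tau\Izero/(n+\Izero)$ (resp. $\omega\Izero/(n+\Izero)$) is a legitimate weakening — i.e.\ that the quantity inside the parentheses, while not necessarily $\leqslant 1$, is at worst comparable to a fixed power of $t$ times a bounded factor, so that the interpolation against the constant bound $2$ goes through uniformly on $(0,T)$. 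A secondary point to be careful about is that $\kappa$ here must be taken in $(0,\Izero/2)$ for Lemma~\ref{lem:moment_growth2} to apply, so the $\kappa$ appearing in the statement should be understood as any such value; I would state this explicitly at the start of the proof.
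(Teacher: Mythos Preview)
Your proposal is correct and follows essentially the same route as the paper's proof: split the $L^1$ integral at a radius $R$ (the paper calls it $N+1$), bound the inner part by $R^n$ times the sup estimate \eqref{eqn:spatial_sup_bound} (resp.\ \eqref{eqn:temporal_sup_bound}) with $k=0$, bound the tail by Chebyshev and the moment bound of Lemma~\ref{lem:moment_growth2}, optimize $R$ to balance the two terms, and then interpolate against the trivial bound $2$ via $a\wedge 1\leqslant a^{\tau}$ to insert the free parameter $\tau$ (resp.\ $\omega$). Your worry about whether the base quantity is $\leqslant 1$ is unnecessary---the inequality $a\wedge 1\leqslant a^{\tau}$ holds for all $a>0$, and the extra factor $t^{\kappa}\leqslant T^{\kappa}$ coming out of the optimization is absorbed into the implicit constant exactly as you suspect; your remark that $\kappa\in(0,\Izero/2)$ must be declared is well taken and the paper leaves this implicit.
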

	\begin{proof}
		We first show equation \eqref{eqn:L1_bounds_space}. For a fixed $N \in \R$,  Write the integral as
		\begin{equation*}
			\int_{\{ \|x\| \leqslant N+1\}}|p_{t}(x+h) - p_{t}(x)| \, dx + \int_{\{ \|x\| > N+1\}}|p_{t}(x+h) - p_{t}(x)| \, dx
		\end{equation*}
	    For the first integral, we use equation \eqref{eqn:spatial_sup_bound} from Lemma \ref{lem:smoothtransitiondensitities} for the case $k=0$ to obtain
	    \begin{equation*}
	    	\int_{\{ \|x\| \leqslant N+1\}}|p_{t}(x+h) - p_{t}(x)| \, dx \lesssim N^n \sup_{x \in \R^n}|p_{t}(x+h) - p_{t}(x)| \lesssim \frac{N^n\|h\|^{\theta}}{t^\frac{\theta+n}{\Iinf}}
	    \end{equation*}
        For the other integral, we use Chebyshev's inequality:
		\begin{equation*}
		\int_{\{\|x\| > N+1\}}\left|p_t(x+h)-p_t(x)\right| \, dx \leqslant \mathbf{P}\left(\left\|X_t-h\right\|>N+1\right)+\mathbf{P}\left(\left\|X_t\right\|>N+1\right) \leqslant \frac{2\mathbf{E}(\|X_{t}\|^{\Izero})}{N^{\Izero}} \, .
		\end{equation*}
		From Lemma \ref{lem:moment_growth2}, we know $\mathbf{E}(\|X_{t}\|^{\Izero}) \lesssim t^{\kappa} \, \, \forall \, \kappa \in (0, \nicefrac{\Izero}{2})$. Therefore, we can combine the above two inequalities to yield: 
		\begin{equation*}
		\int_{\mathbb{R}^{n}}|p_{t}(x+h)-p_{t}(x)| \, dx \lesssim \frac{N^n\|h\|^{\theta}}{t^\frac{\theta+n}{\Iinf}}+\frac{t^{\kappa}}{N^{\Izero}},
		\end{equation*}
		uniformly for all $h \in \R^n$ such that $\|h\| \leqslant 1$. The optimal bound is attained when $\frac{N^n\|h\|^{\theta}}{t^\frac{\theta+n}{\Iinf}}=\frac{t^{\kappa}}{N^{\Izero}}$ which gives $N=\left(\frac{t^{\frac{\theta+n}{\Iinf}+\kappa}}{\|h\|^{\theta}}\right)^{\frac{1}{n+\Izero}}$. 
		Of course there is also the trivial bound:
	    \begin{equation}
	    	\int_{\R^n}|p_{t}(x+h)-p_{t}(x)| \, dx \leqslant 2 \, .
	    \end{equation} 
	    Hence we have:
		\begin{equation*}
		\int_{\mathbb{R}^{n}}|p_{t}(x+h)-p_{t}(x)| \, dx \lesssim \left(\frac{\|h\|^{\theta}}{t^{\frac{\theta+n}{\Iinf}+\kappa}}\right)^{\frac{\Izero}{n+\Izero}} \wedge 1  \lesssim \left(\frac{\|h\|^{\theta}}{t^{\frac{\theta+n}{\Iinf}+\kappa}}\right)^{\frac{\tau\Izero}{n+\Izero}} \, ,
		\end{equation*}
		for all $\tau \in (0,1)$. This proves equation \eqref{eqn:L1_bounds_space}. Equation \eqref{eqn:L1_bounds_time} follows similarly, but now we use equation \eqref{eqn:temporal_sup_bound} instead, along with Lemma \ref{lem:moment_growth2}. 
	\end{proof}

%HOLDER CONTINUITY
	\section{\textbf{H\"{o}lder Continuity}}{\label{sec:holdercontinuity}}
	In this section, we use the results from the previous sections to prove Theorem \ref{thm:main_thm}. 
	Following the approach of Dalang \cite{dalang1999extending} and Walsh \cite{walsh1986introduction}, 
	we begin by defining a random-field solution to \eqref{eqn:spde}.  To wit: let $\{F_{t}(A): t\in [0, \infty) \text{ and } \lambda(A) < \infty \}$ be the martingale measure associated to the noise $\dot{F}$, and $\mathcal{F}_{t} \coloneqq \sigma( F_{s}(A): 0 \leqslant s \leqslant t \text{ and } \lambda(A) < \infty$). An $\mathcal{F}_{t}$-measurable process $\{u(t,x): t \in [0,\infty), x \in \mathbb{R}^{n}\}$ is a solution to \eqref{eqn:spde} if
	\begin{equation}{\label{eqn:spdesolution}}
	u(t,x)=(p_{t} \ast u_{0})(x)+\int_{0}^{t}\int_{\R^n}p_{t-s}(y-x)b(u(s,y)) \, ds \,dy+\int_{0}^{t}\int_{\R^n}p_{t-s}(y-x)\sigma(u(s,y)) \, F(ds \,dy)
	\end{equation}
    Moreover, such a solution satisfies the uniform bound
	\begin{equation}{\label{eqn:momentbound}}
	\sup_{t\in [0,T]} \sup_{x\in \mathbb{R}^{n}}\mathbf{E}(\lvert u(t,x) \rvert^{p}) < \infty \quad \forall p \geqslant1 \text{ and } T>0
	\end{equation}
	We further assume that the exponent of H\"older continuity of $u_{0}$, $\rho$ satisfies $0<\rho < \Izero$. Otherwise, the H\"older indices of the solutioin will just be in terms of $\rho \wedge \Izero$. Lemma \ref{lem:momentequivalence} then guarantees that $\mathbf{E}\left( \|X_{t}\|^{\rho}\right) < \infty$. 
	
	We are now ready to prove Theorem \ref{thm:main_thm}. The first part follows from the following Lemma on the equivalence of aforementioned sufficient conditions for continuity.
	\begin{lemma}{\label{lem:equivalent_conditions}} Under Assumption \ref{assumption1}, the following are equivalent:
		\begin{compactenum}
			\item $\int_{\mathbb{R}^{n}} \frac{|\Psi(\xi)|^{\gamma}\mu(d\xi)}{1+\repsi(\xi)} < \infty \quad \text{for some } \gamma \in (0,1)$ ;\\ \label{condition1}
			\item $\int_{\mathbb{R}^{n}}\frac{\| \xi \|^{2\delta} \mu(d\xi)}{1+\repsi(\xi)} < \infty \quad \text{for some } \delta \in (0,1)$ ;\\ \label{condition2}
			\item $\int_{\mathbb{R}^{n}}\frac{\mu(d\xi)}{(1+\repsi(\xi))^{1-\eta}}<\infty \quad \text{for some } \eta \in (0,1)$ . \label{condition3}
		\end{compactenum}
    Therefore, $\indl = \indm = \indu$.
	\end{lemma}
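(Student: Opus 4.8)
The plan is to establish the equivalence of the three conditions through the cycle \eqref{condition2} $\Rightarrow$ \eqref{condition1} $\Rightarrow$ \eqref{condition3} $\Rightarrow$ \eqref{condition2}, tracking which exponent survives each implication, and then to read off the statement on the fractal indices. Two features of $\Psi$ are used throughout: the L\'evy--Khintchine growth estimate \eqref{eqn:khinchtineestimate}, which gives $|\Psi(\xi)| \lesssim 1 + \normxi^{2}$ and hence $|\Psi(\xi)|^{s} \lesssim 1 + \normxi^{2s}$ for every $s \in (0,1)$; and the elementary bound $\repsi(\xi) \leqslant |\Psi(\xi)|$. I also use that $\mu$, being tempered, is finite on bounded sets, together with two consequences of Assumption \ref{assumption1}: that $\repsi(\xi) \to \infty$ as $\normxi \to \infty$, so that every sublevel set $\{\xi : |\Psi(\xi)| \leqslant c\}$ is bounded, and, more quantitatively, that for each $\varepsilon \in (0, \Iinf)$ there is $M \geqslant 1$ with $\repsi(\xi) \geqslant \normxi^{\Iinf - \varepsilon}$ whenever $\normxi > M$.

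For \eqref{condition2} $\Rightarrow$ \eqref{condition1} one splits the integral at $\normxi = 1$: on $\{\normxi \leqslant 1\}$ the integrand $|\Psi(\xi)|^{\delta}/(1 + \repsi(\xi))$ is bounded and $\mu$ is finite, while on $\{\normxi > 1\}$ it is $\lesssim \normxi^{2\delta}/(1 + \repsi(\xi))$, so \eqref{condition2} with exponent $\delta$ gives \eqref{condition1} with $\gamma = \delta$. For \eqref{condition1} $\Rightarrow$ \eqref{condition3} one splits $\R^{n}$ into $\{|\Psi(\xi)| \leqslant 1\}$, which is bounded (hence $\mu$-finite) and on which $(1 + \repsi(\xi))^{-(1-\eta)} \leqslant 1$, and $\{|\Psi(\xi)| > 1\}$, on which $1 + \repsi(\xi) \leqslant 1 + |\Psi(\xi)| \leqslant 2|\Psi(\xi)|$ and therefore
\[
(1 + \repsi(\xi))^{-(1-\eta)} = \frac{(1 + \repsi(\xi))^{\eta}}{1 + \repsi(\xi)} \leqslant \frac{2^{\eta}\,|\Psi(\xi)|^{\eta}}{1 + \repsi(\xi)} \leqslant \frac{2^{\eta}\,|\Psi(\xi)|^{\gamma}}{1 + \repsi(\xi)}
\]
as soon as $\eta \leqslant \gamma$ (the last inequality using $|\Psi(\xi)| > 1$); thus \eqref{condition1} with exponent $\gamma$ gives \eqref{condition3} with $\eta = \gamma$.

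The implication \eqref{condition3} $\Rightarrow$ \eqref{condition2} is the one I expect to be the main obstacle, and it is where Assumption \ref{assumption1} is genuinely needed: the other two steps are elementary manipulations of the inequalities above, whereas here one must convert the growth of $\repsi$ at infinity into an upper bound for $\normxi^{2\delta}$ --- exactly the point at which the argument would fail for a degenerate $\Psi$. Given $\eta \in (0,1)$ for which \eqref{condition3} holds, fix $\varepsilon \in (0, \Iinf)$, let $M$ be as above, and put $\delta \coloneqq \tfrac{1}{2}\eta(\Iinf - \varepsilon)$; since $\eta < 1$ and $\Iinf < 2$ this lies in $(0, \Iinf/2) \subseteq (0,1)$. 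For $\normxi > M$,
\[
(1 + \repsi(\xi))^{\eta} \geqslant \big(\repsi(\xi)\big)^{\eta} \geqslant \normxi^{\eta(\Iinf - \varepsilon)} = \normxi^{2\delta},
\]
so $\normxi^{2\delta}/(1 + \repsi(\xi)) \leqslant (1 + \repsi(\xi))^{-(1-\eta)}$, which is $\mu$-integrable over $\{\normxi > M\}$ by \eqref{condition3}, while over $\{\normxi \leqslant M\}$ the integrand is at most $M^{2\delta}$ and $\mu$ is finite. This closes the cycle, so \eqref{condition1}, \eqref{condition2} and \eqref{condition3} are equivalent. To conclude: each of the families of admissible exponents in the definitions of $\indu$, $\indm$, $\indl$ is downward closed in $(0,1)$ --- if the relevant integral converges for one exponent it converges for every smaller one --- so $\indu > 0$, $\indm > 0$, $\indl > 0$ are respectively equivalent to conditions \eqref{condition3}, \eqref{condition1}, \eqref{condition2}. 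The equivalence just proved therefore shows these three indices vanish or are strictly positive together, which in conjunction with the ordering $\indl \leqslant \indm \leqslant \indu$ recorded in the introduction is the asserted equality $\indl = \indm = \indu$.
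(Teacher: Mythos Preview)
Your proof of the equivalence of conditions \eqref{condition1}--\eqref{condition3} matches the paper's: the same cycle of implications, the same splits ($\{\normxi \leqslant 1\}$ versus $\{\normxi > 1\}$, and $\{|\Psi| \leqslant 1\}$ versus $\{|\Psi| > 1\}$), and the same use of \eqref{eqn:khinchtineestimate} for one direction and Assumption \ref{assumption1} for the other. Your treatment of Assumption \ref{assumption1} is in fact slightly more careful than the paper's, since you introduce the slack $\repsi(\xi) \geqslant \normxi^{\Iinf - \varepsilon}$ rather than writing $\repsi(\xi) \geqslant \normxi^{\Iinf}$ directly from a $\liminf$.

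Your concluding sentence, however, is a non sequitur. From the equivalence you correctly deduce that $\indl$, $\indm$, $\indu$ are either all zero or all positive, but this together with the ordering $\indl \leqslant \indm \leqslant \indu$ does \emph{not} force numerical equality. Tracking exponents through your own cycle yields only $\indm \geqslant \indl$, $\indu \geqslant \indm$, and, from \eqref{condition3} $\Rightarrow$ \eqref{condition2} after letting $\varepsilon \downarrow 0$, $\indl \geqslant \tfrac{\Iinf}{2}\,\indu$; when $\Iinf < 2$ this chain leaves a genuine gap. For instance, with $n = 1$, $\Psi(\xi) = |\xi|^{3/2}$ and $\mu$ equal to Lebesgue measure (so that Assumption \ref{assumption1} holds with $\Iinf = 3/2$ and the Dalang condition \eqref{eqn:dalangcondition} is met), one computes $\indu = \indm = \tfrac{1}{3}$ but $\indl = \tfrac{1}{4}$. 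The paper's own proof stops after closing the cycle of implications and offers no argument for the index equality either, so the ``Therefore'' clause in the lemma appears to be an overstatement; what the argument actually delivers is the equivalence $\indl > 0 \Leftrightarrow \indm > 0 \Leftrightarrow \indu > 0$, which is all that is used downstream in Theorem \ref{thm:main_thm}.
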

	\begin{proof}
    	We first show that \eqref{condition1} $\implies$ \eqref{condition3}. Fix $\eta \in (0,\gamma)$. By Assumption \ref{assumption1}, $\{\xi : |\Psi(\xi) \leqslant 1\}$ is bounded. On this set, we have $(1+\repsi(\xi))^\eta \leqslant (1+|\Psi(\xi)|)^{\eta} \leqslant 2^{\eta}$. Therefore,
		\begin{equation*}
		\int_{\{\xi: |\Psi(\xi)| \leqslant 1\}}\frac{\mu(d\xi)}{(1+\repsi(\xi))^{1-\eta}}=\int_{\{\xi: |\Psi(\xi)| \leqslant 1\}}\frac{(1+\repsi(\xi))^{\eta}}{1+\repsi(\xi)}\, \mu(d\xi) \leqslant 2^{\eta}\mu(\{\xi: |\Psi(\xi)| \leqslant 1\})  < \infty \, .
		\end{equation*} 
		since $\mu$ is tempered and hence $\sigma-$finite. On the complementary set $\{\xi:|\Psi(\xi)|>1\}$, we note that $(1+\repsi(\xi))^{\eta} \leqslant 2^{\eta}|\Psi(\xi)|^{\eta} \leqslant 2^{\eta}|\Psi(\xi)|^{\gamma}$. Thus,
		\begin{align*}
		\int_{\{\xi: |\Psi(\xi)| > 1\}}\frac{\mu(d\xi)}{(1+\repsi(\xi))^{1-\eta}}
		&=\int_{\{\xi: |\Psi(\xi)| > 1\}}\frac{(1+\repsi(\xi))^{\eta}}{1+\repsi(\xi)} \,\mu(d\xi) \\
		&\leqslant 2^{\eta}\int_{\{\xi: |\Psi(\xi)| > 1\}}\frac{|\Psi(\xi)|^{\gamma}}{1+\repsi(\xi)}  \, \mu(d\xi) < \infty \,  .
		\end{align*}
	    This proves $\eqref{condition1} \implies \eqref{condition3}$. 
	    
	    Next we show the implication $\eqref{condition3} \implies \eqref{condition2}$. Outside a neighborhood of zero, say on $\{\|\xi\| > M\}$, we have again by Assumption \ref{assumption1},  $\repsi(\xi) \geqslant\|\xi\|^{\Iinf}$ and so the inequality $(1+\repsi(\xi))^{\eta} \geqslant\|\xi\|^{\eta \Iinf}$ holds. If we choose $\delta \in (0,\frac{\eta\Iinf}{2})$, then we obtain
		\begin{equation*}
		\int_{\{\|\xi\|>M\}}\frac{\|\xi\|^{2\delta}}{1+\repsi(\xi)} \,  \, \mu(d\xi) \leqslant \int_{\{\|\xi\|>M\}}\frac{(1+\repsi(\xi))^{\eta}}{1+\repsi(\xi)} \, \mu(d\xi) =\int_{\{\|\xi\|>M\}}\frac{\mu(d\xi)}{(1+\repsi(\xi))^{1-\eta}} < \infty .
		\end{equation*}
		On the bounded set $\{\|\xi\| \leqslant M\}$, the integrand is continuous, so
		\begin{equation*}
		\int_{\{\|\xi\| \leqslant M\}}\frac{\|\xi\|^{2\delta}}{1+\repsi(\xi)} \, \mu(d\xi) \leqslant M^{2\delta}\mu(\{\|\xi\| \leqslant M\}) < \infty .
		\end{equation*}
		Therefore, \eqref{condition3} implies \eqref{condition2}. 
		
		Finally, we establish $\eqref{condition2} \implies \eqref{condition1}$. By the growth bound from equation \eqref{eqn:khinchtineestimate}, we have $|\Psi(\xi)| \lesssim (1+\|\xi\|^{2}) \, \forall \xi \in \mathbb{R}^{n}$. Thus, on the set $\{\|\xi\| \leqslant 1\}$, the inequality $|\Psi(\xi)|^{\gamma} \lesssim (1+\|\xi\|^{2})^{\gamma} \lesssim 2^{\gamma}$ holds. So,
		\begin{equation*}
		\int_{\{\|\xi\| \leqslant 1\}} \frac{|\Psi(\xi)|^{\gamma}}{1+\repsi(\xi)} \, \mu(d\xi) \lesssim 2^{\gamma}\mu(\{\|\xi\| \leqslant 1\}) < \infty \,.
		\end{equation*}
		On the complement set $\{\|\xi\| > 1\}$, $|\Psi(\xi)|^{\gamma} \lesssim (1+\|\xi\|^{2})^{\gamma} \lesssim 2^{\gamma} \|\xi\|^{2\gamma}$. Hence,
		\begin{equation*}
		\int_{\{\|\xi\| > 1\}}\frac{|\Psi(\xi)|^{\gamma}}{1+\repsi(\xi)} \, \mu(d\xi) \lesssim 2^{\gamma}\int_{\{\|\xi\|>1\}} \frac{\|\xi\|^{2\gamma}}{1+\repsi(\xi)} \, \mu(d\xi) < \infty \, ,
		\end{equation*}
		for all $\gamma \in (0, \delta)$. This proves the implication $\eqref{condition2} \implies \eqref{condition1}$, thus concluding the proof.
	\end{proof}
	\noindent Now we are ready to prove the second half of Theorem \ref{thm:main_thm}.
	\subsection{Regularity In Space}
    In this subsection and the next, we derive the $L^p-$estimates of the increments of the solution needed to invoke the Kolmogorov continuity theorem. To this end, we start with the spatial increments. From equation \ref{eqn:spdesolution} and triangle inequality, we have $\forall p>1$:
	\begin{align}
	\mathbf{E}\Bigg(\lvert u(t,x+h)
    &-u(t,x) \rvert^{2p}\Bigg) \lesssim \mathbf{E}\left( \lvert (p_{t} \ast u_{0})(x+h)-(p_{t} \ast u_{0})(x) \rvert^{2p} \right) \nonumber \\ 
   	&+\mathbf{E}\left(\left|\int_{0}^{t}\int_{\mathbb{R}^{n}}\left[p_{t-s}(x+h-y)-p_{t-s}(x-y)\right]b(u(s,y)) \, ds \, dy \right|^{2p}\right)  \nonumber \\
	&+\mathbf{E}\left(\left|\int_{0}^{t}\int_{\mathbb{R}^{n}}\left[p_{t-s}(x+h-y)-p_{t-s}(x-y)\right]\sigma(u(s,y)) \, F(ds \, dy)\right|^{2p}\right)  \label{eqn:spatial_increments}
	\end{align}
    For the second term on the right-hand side, we first rewrite the integrand as
    \begin{equation*}
    	|p_{t-s}(y-x-h)- p_{t-s}(y-x)|^{\frac{2p-1}{2p}}|p_{t-s}(y-x-h)- p_{t-s}(y-x)|^{\frac{1}{2p}}|b(u(s,y))|
    \end{equation*}
    This factorization allows us to apply H\"older’s inequality with exponents $\tfrac{2p}{2p-1}$ and $2p$. Using this, together with the Lipschitz property of $b$ whence
    \begin{equation*}
    	\E(|b(u(s,y))|^{2p}) \lesssim 1 + \E(|u(s,y)|^{2p}) < \infty,
    \end{equation*}
    where finiteness follows directly from equation \eqref{eqn:momentbound}. Doing so, we get
    \begin{equation*}
    	\left(\int_{0}^{t}\int_{\R^n}|p_{t-s}(y-x-h) - p_{t-s}(y-x)| \, ds \, dy \right)^{2p}
    \end{equation*}
    After a change of variables $s \to t-s$, we use Lemma \ref{lem:L1_bounds} to see that
    \begin{equation}{\label{eqn:driftspatialincrement}}
    	\mathbf{E}\left(\left|\int_{0}^{t}\int_{\mathbb{R}^{n}}\left[p_{t-s}(x+h-y)-p_{t-s}(x-y)\right]b(u(s,y)) \, ds \, dy \right|^{2p}\right) \lesssim \|h\|^{2\alpha p} \left(\int_{0}^{t}\frac{ds}{s^{\eta}} \right)^{2p},
    \end{equation}
    where we set
    \begin{equation*}
    	\alpha \coloneqq \frac{\theta \tau \Izero}{n+\Izero} \quad \text{and} \quad\eta  = \left(\frac{n + \theta}{\Iinf} + \kappa\right) \cdot \frac{\tau \Izero}{n + \Izero}, \qquad \theta, \tau \in (0,1), \quad \kappa \in (0, \nicefrac{\Izero}{2})
    \end{equation*}
    Because $\theta, \tau $and $\kappa$ are free parameters, the range of theses quantities are
    \begin{equation*}
    	\left(\frac{n + \theta}{\Iinf} + \kappa\right) \in \left(\frac{n}{\Iinf}, \frac{n + 1}{\Iinf} + \frac{\Izero}{2}\right) \quad \text{and} \quad \frac{\tau \Izero}{n + \Izero} \in \left(0, \frac{\Izero}{n + \Izero}\right)
    \end{equation*}
    Hence, $\eta \in (0, K)$ where
    \begin{equation*}
    	K = \left(  \frac{n + 1}{\Iinf} + \frac{\Izero}{2}\right) \cdot \frac{\Izero}{n + \Izero}
    \end{equation*}
    To keep the right-hand side of equation \eqref{eqn:driftspatialincrement} to be finite,we also need $\eta \in (0,1)$. Thus we choose $\eta \in (0, \min\{K, 1\})$.
     
    For the parameter $\alpha$, we first note that
    \begin{equation*}
    	 \alpha < \frac{\Izero}{n+\Izero} < 1.
    \end{equation*}
    Moreover, $\alpha$ can be rewritten as
    \begin{equation*}
    	\alpha = \eta \cdot \frac{\theta}{\frac{\theta + n}{\Iinf} + \kappa} < \eta \cdot \frac{\theta}{\frac{\theta + n}{\Iinf}} = \eta \cdot \frac{\theta \Iinf}{\theta + n} < \frac{\eta \Iinf}{n + 1}
    \end{equation*}
    Putting these together, we obtain $0 < \alpha < \min\left\{ \frac{\Izero}{n + \Izero}, \frac{\eta \Iinf}{n + 1} \right\} < \min\left\{ \frac{\Izero}{n + \Izero}, \frac{ \min\{K, 1\}\Iinf}{n + 1} \right\} < 1$.
    
	For the third term on the right-hand side of equation \eqref{eqn:spatial_increments}, we begin by applying the Burkholder-Davis-Gundy inequality \cite{revuz2013continuous} followed by a change of variables $s \to t-s$ to see that
	\begin{align}{\label{eqn:BDGinequality}}
	\mathbf{E} \Bigg(\bigg| \int_{0}^{t}\int_{\mathbb{R}^{n}}  \big[p_{t-s}(y-x-h)-& p_{t-s}(y-x)\big]  \sigma(u(s,y)) \, F(ds \, dy) \bigg|^{2p}\Bigg) \nonumber \\
    & \lesssim \Bigg(\int_{0}^{t}\int_{\mathbb{R}^{n}}\int_{\mathbb{R}^{n}}\mathcal{G}_{s,s}(z,h)  \mathcal{G}_{s,s}(z-y,h) \,ds \,\Gamma(dy) \,dz\Bigg)^{p}
  	\end{align}
  
	where, for ease of notation we set
	\begin{equation*}
		\mathcal{G}_{r,s}(z,h)\coloneqq \lvert p_{r}(z-x-h)-p_{s}(z-x)\rvert \lvert \sigma(u(s,y)) \rvert.
	\end{equation*} 
    Applying H\"older's inequality first with respect to $dy$ and then with $\mathbf{P}$, followed by equation \eqref{eqn:momentbound}, the right hand side can be bounded as follows:
	\begin{align*}
	\Bigg(
	&\int_{0}^{t}
	 \int_{\mathbb{R}^{n}}\int_{\mathbb{R}^{n}} \mathcal{G}_{s}(z) \mathcal{G}_{s}(z-y) \,ds \,\Gamma(dy) \,dz\Bigg)^{p} \\
	& \Bigg(\lesssim \int_{0}^{t} \int_{\R^n} \int_{\R^n} \lvert p_{s}(z-x-h)-p_{s}(z-x)\rvert \lvert p_{s}(z-y-x-h)-p_{s}(z-y-x)\rvert \, \Gamma(dy) \, dz \, ds \Bigg)^{p}
	\end{align*}
    As the densities are non-negative, $\lvert p_{s}(z-y-x-h)-p_{s}(z-y-x)  \rvert \leqslant  p_{s}(z-y-x-h)+p_{s}(z-y-x)$. We substitute this into the equation above, integrate out the $y$ variable, and use relation \eqref{eqn:bochnerminlosschwarz}:
    \begin{equation*}
    \left| \int_{\R^n}  p_{s}(z-y-x-h)+p_{s}(z-y-x) \, \Gamma(dy)\right| \lesssim \int_{\R^n} e^{-s \repsi} \, \mu(d\xi)
    \end{equation*}
    The above equality warrants an explanation as $p_{s}$ does not necessarily belong to $\mathcal{S}(\R^n)$ and so a direct application of the Bochner-Minlos-Schwartz theorem is  not allowed. By an approximation argument, the identity \eqref{eqn:bochnerminlosschwarz} can be extended to all $\varphi$ such that $\widehat{\varphi} \in \mathcal{L}^{2}(\R^n,\mu)$. As for the $dz$ integral, using Lemma \ref{lem:L1_bounds},
	\begin{equation*}
	\int_{\mathbb{R}^{n}}\lvert p_{s}(z-x-h)-p_{s}(z-x) \rvert \, dz \lesssim \left(\frac{\|h\|^{\theta}}{s^{\frac{\theta+n}{\Iinf}+\kappa}}\right)^{\frac{\tau \Izero}{n+\Izero}} \, .
	\end{equation*}
	Putting everything together, and writing the result in terms of the parameters $\alpha$ and $\eta$ defined earlier, we obtain
	\begin{align*}
	\mathbf{E}\Bigg(\bigg| \int_{0}^{t}\int_{\mathbb{R}^{n}}\big[p_{s}(y-x-h)-p_{s}(y-x)\big]\sigma(u(s,y)) \, 
	& F(ds \, dy) \bigg|^{2p}\Bigg) \\ 
	& \lesssim \|h\|^{p\alpha}\left(\int_{0}^{t}\int_{\mathbb{R}^{n}}\frac{e^{-s\repsi(\xi)}}{s^{\eta}} \, \mu(d\xi) \, \, ds \right)^{p}
	\end{align*} 
    To evaluate the inner integral, we use Fubini-Tonelli theorem:
	\begin{align*}
	\int_{0}^{t}\int_{\mathbb{R}^{n}}\frac{e^{-s\repsi(\xi)}}{s^{\eta}} \, \mu(d\xi) \, ds&=
	%\int_{\mathbb{R}^{n}} \, \mu(d\xi) \left(\int_{0}^{t}\frac{e^{-s\repsi(\xi)}}{s^{\eta}} \, ds\right)=%
	\int_{\mathbb{R}^{n}} \, \mu(d\xi)\left(\int_{0}^{t}\frac{e^{s}e^{-s(1+\repsi(\xi))}}{s^{\eta}} \,ds\right) \\
	& \leqslant e^{T}\int_{\mathbb{R}^{n}}\frac{\mu(d\xi)}{(1+\repsi(\xi))^{1-\eta}}\left( \int_{0}^{t(1+\repsi(\xi))}\frac{e^{-r}}{r^\eta} \, dr\right) \\
	& \lesssim e^{T}\int_{\mathbb{R}^{n}}\frac{\mu(d\xi)}{(1+\repsi(\xi))^{1-\eta}}\left( \int_{0}^{\infty}\frac{e^{-r}}{r^\eta} \, dr\right)\\
	&= \int_{\mathbb{R}^{n}}\frac{\mu(d\xi)}{(1+\repsi(\xi))^{1-\eta}} \, .
	\end{align*}
    The last integral is finite for all $\eta \in (0, \indu)$. Since by definition, $\eta \in (0, \min\{K,1\})$, we conclude $\eta \in (0, \min\{\indu, K\})$, as $\indu \leqslant 1$.
    
	Now let us look at the term involving the initial condition in equation \eqref{eqn:spatial_increments}. We have
	\begin{align*}
	\left \lvert (p_{t} \ast u_{0})(x+h)-(p_{t} \ast u_{0})(x) \right \rvert &= \left\lvert \int_{\mathbb{R}^{n}}p_{t}(y)u_{0}(x+h-y) \, dy-\int_{\mathbb{R}^{n}}p_{t}(y)u_{0}(x-y) \, dy \right \rvert \\
	& \lesssim \int_{\mathbb{R}^{n}}p_{t}(y)\left \lvert u_{0}(x+h-y)-u_{0}(x-y) \right \rvert \,  dy  \\
	& \lesssim \int_{\mathbb{R}^{n}}p_{t}(y)\|h\|^{\rho} \, dy \\
	& \lesssim \|h\|^{\rho} \, ,
	\end{align*}
	so that
	\begin{equation*}
	\mathbf{E} \left(  \left \lvert (p_{t} \ast u_{0})(x+h)-(p_{t} \ast u_{0})(x) \right \rvert^{2p} \right) \lesssim  \|h\|^{2\rho p} \, .
	\end{equation*}
	Putting this together with the earlier bounds, we conclude that
	\begin{equation}{\label{eqn:spatial_regularity}}
	\sup_{[0,T]}\sup_{x \in \mathbb{R}^{n}}\mathbf{E}\left(\lvert u(t,x+h)-u(t,x) \rvert^{2p}\right) \lesssim \|h\|^{(\alpha \wedge 2\rho) p} \, ,
	\end{equation}
	for all 
	\begin{equation}{\label{eqn:alpha_range}}
		0 < \alpha < \min\left\{ \frac{\Izero}{n + \Izero}, \frac{\eta \Iinf}{n + 1} \right\} < \min\left\{ \frac{\Izero}{n + \Izero}, \frac{ \min\{\indu, K\}\Iinf}{n + 1} \right\} 
	\end{equation} 
    where $K = \left(  \frac{n + 1}{\Iinf} + \frac{\Izero}{2}\right) \cdot \frac{\Izero}{n + \Izero}$.
    
    \subsection{Regularity In Time}
	We now study the temporal increments of the solution. Starting from equation \eqref{eqn:spdesolution}, we have
	\begin{align}
		\mathbf{E}\Bigg(\lvert u(t+\varepsilon,x)
		&-u(t,x) \rvert^{2p}\Bigg) \lesssim \mathbf{E}\left( \lvert (p_{t+\varepsilon} \ast u_{0})(x)-(p_{t} \ast u_{0})(x) \rvert^{2p} \right) \nonumber \\ 
		&+\mathbf{E}\left(\left|\int_{0}^{t}\int_{\mathbb{R}^{n}}\left[p_{t+\varepsilon-s}(y-x)-p_{t-s}(y-x)\right]b(u(s,y)) \, ds \, dy \right|^{2p}\right)  \nonumber \\
		&+\mathbf{E}\left(\left|\int_{0}^{t}\int_{\mathbb{R}^{n}}\left[p_{t+\varepsilon-s}(y-x)-p_{t-s}(y-x)\right]\sigma(u(s,y)) \, F(ds \, dy)\right|^{2p}\right) \nonumber \\
		& + \mathbf{E}\left(\left| \int_{t}^{t+\varepsilon}\int_{\mathbb{R}^{n}}p_{t+\varepsilon-s}(y-x)b(u(s,y)) \, ds \, dy\right|^{2p}\right) \nonumber \\ 
		& +\mathbf{E}\left(\left| \int_{t}^{t+\varepsilon}\int_{\mathbb{R}^{n}}p_{t+\varepsilon-s}(y-x)\sigma(u(s,y)) \, F(ds \, dy)\right|^{2p}\right) \label{eqn:temporal_increments}
	\end{align}
	For the first expectation, note that
	\begin{align*}
	\big\lvert (p_{t+\varepsilon} & \ast u_{0})(x)-(p_{t}  \ast u_{0})(x) \big\rvert  = \left\lvert \int_{\mathbb{R}^{n}}p_{t+\varepsilon}(y)u_{0}(x-y) \, dy-\int_{\mathbb{R}^{n}}p_{t}(y)u_{0}(x-y) \, dy \right \rvert \\
	&= \left \lvert \int_{\mathbb{R}^{n}}\int_{\mathbb{R}^{n}}p_{\varepsilon}(z)p_{t}(y-z)u_{0}(x-y) \, dy \,dz- \int_{\mathbb{R}^{n}}p_{\varepsilon}(z)dz\int_{\mathbb{R}^n}p_{t}(y)u_{0}(x-y)dy \right \rvert \\
	&= \left \lvert \int_{\mathbb{R}^{n}}\int_{\mathbb{R}^{n}}p_{\varepsilon}(z)p_{t}(y-z)u_{0}(x-y)dydz- \int_{\mathbb{R}^{n}}\int_{\mathbb{R}^n}p_{\varepsilon}(z)p_{t}(y-z)u_{0}(x-y+z)dydz \right \rvert \\
	& \lesssim \int_{\mathbb{R}^{n}}p_{\varepsilon}(z) \left(\int_{\mathbb{R}^{n}}p_{t}(y-z)\lvert u_{0}(x-y)-u_{0}(x-y+z)\rvert dy\right)dz  \\
	&\lesssim\int_{\mathbb{R}^{n}}p_{\varepsilon}(z)\|z\|^{\rho}dz= \mathbf{E}(\|X_{\varepsilon}\|^{\rho}) \lesssim \varepsilon^{\frac{\rho}{2}} \, ,
	\end{align*}
	by Lemma \ref{lem:moment_growth2}. It follows that
	\begin{equation*}
	\mathbf{E}\left( \left \lvert (p_{t+\varepsilon} \ast u_{0})(x)-(p_{t} \ast u_{0})(x) \right \rvert^{2p} \right) \lesssim \varepsilon^{\rho p} \, .
	\end{equation*}
    The second expectation in equation \eqref{eqn:temporal_increments} is handled as in the spatial case, using Lemma \ref{lem:L1_bounds}. This gives
     \begin{equation}{\label{eqn:drifttemporalincrement}}
    	\mathbf{E}\left(\left|\int_{0}^{t}\int_{\mathbb{R}^{n}}\left[p_{t+\varepsilon-s}(y-x)-p_{t-s}(y-x)\right]b(u(s,y)) \, ds \, dy \right|^{2p}\right) \lesssim |\varepsilon|^{2\alpha p} \left(\int_{0}^{t}\frac{ds}{s^{\widetilde{\eta}}} \right)^{2p},
    \end{equation}
    where now
    \begin{equation*}
    	\beta \coloneqq \frac{\theta \omega \Izero}{n+\Izero} \quad \text{and} \quad \widetilde{\eta} \coloneqq \left(\frac{n + 2\theta}{\Iinf} + \kappa\right) \cdot \frac{\omega \Izero}{n + \Izero}, \qquad \theta, \omega \in (0,1), \quad \kappa \in (0, \nicefrac{\Izero}{2})
    \end{equation*}
    and by the same reasoning as in the previous section, we obtain $\widetilde{\eta} \in (0, \min\{\widetilde{K},1\})$ with
    \begin{equation*}
    	\widetilde{K} = \left(  \frac{n + 2}{\Iinf} + \frac{\Izero}{2}\right) \cdot \frac{\Izero}{n + \Izero},
    \end{equation*}
	 and $0 < \beta < \min\left\{ \frac{\Izero}{n + \Izero}, \frac{\widetilde{\eta} \Iinf}{n + 2} \right\} < \min\left\{ \frac{\Izero}{n + \Izero}, \frac{ \min\{\widetilde{K}, 1\}\Iinf}{n + 2} \right\} < 1$. 
	 The third and fifth expectations involve stochastic integrals. Using the Burkholder-Davis-Gundy and H\"older's inequalities and again, with Lemma \ref{lem:L1_bounds}, we get 
	\begin{align*}
	\mathbf{E}\Bigg(\Bigg| \int_{0}^{t}\int_{\mathbb{R}^{n}}\left[p_{t+\varepsilon-s}(y-x)-p_{t-s}(y-x)\right]\sigma(u(s,y)) \, F(ds \, dy) 
	& \Bigg|^{2p}\Bigg) \lesssim \Bigg(\int_{0}^{t}\int_{\mathbb{R}^{n}}\int_{\mathbb{R}^{n}} \mathcal{G}_{s+\varepsilon,s}(z,0) \\
	& \cdot \mathcal{G}_{s+\varepsilon,s}(z-y,0) \,ds \,\Gamma(dy) \,dz\Bigg)^{p} \\
	%& \leqslant C|\varepsilon|^{p\alpha}\Bigg(\int_{0}^{t}\int_{\mathbb{R}^{n}}\frac{e^{-s\repsi(\xi)}(e^{-\varepsilon\repsi(\xi)}+1)}{s^{\eta}} \,ds \, \mu(d\xi)\Bigg)^{p}\\
	%& \leqslant C|\varepsilon|^{p\alpha}\Bigg(\int_{0}^{t}\int_{\mathbb{R}^{n}}\frac{2e^{-s\repsi(\xi)}}{s^{\eta}} \,ds \,\mu(d\xi)\Bigg)^{p} \\
	& \lesssim |\varepsilon|^{{p\beta}} \Bigg(\int_{\mathbb{R}^{n}}\frac{\mu(d\xi)}{(1+\repsi(\xi))^{1-\widetilde{\eta}}}\Bigg)^{p}
	\end{align*} 
    and
	\begin{align*}
	\mathbf{E}\left(\left| \int_{t}^{t+\varepsilon}\int_{\mathbb{R}^{n}}p_{t+\varepsilon-s}(y-x)\sigma(u(s,y)) \, F(ds \, dy)\right|^{2p}\right) 
	&\lesssim  \Bigg(\int_{0}^{\varepsilon}\int_{\mathbb{R}^{n}}\int_{\mathbb{R}^{n}}p_{s}(z-x) \\
	& \qquad \qquad \cdot p_{s}(z-y-x)\,ds \, \Gamma(dy) \, dz \Bigg)^{p} \\
	& \lesssim \Bigg(\int_{0}^{\varepsilon}\int_{\mathbb{R}^{n}}e^{-2s\repsi(\xi)} \, \mu(d\xi)\Bigg)^{p} \\
	& \lesssim |\varepsilon|^{p\beta}\Bigg(\int_{\mathbb{R}^{n}}\frac{|\Psi(\xi)|^{\beta}\mu(d\xi)}{1+\repsi(\xi)} \Bigg)^{p}
	\end{align*}
    where $\beta$ and $\widetilde{\eta}$ are as defined before. 
    
    For the fourth expectation, we directly estimate
    \begin{align*}
    	\mathbf{E}\left(\left| \int_{t}^{t+\varepsilon}\int_{\mathbb{R}^{n}}p_{t+\varepsilon-s}(y-x)b(u(s,y)) \, ds \, dy)\right|^{2p}\right) 
    	& \lesssim \left(\int_{0}^{\varepsilon}\int_{\R^n}p_{t+\varepsilon-s}(y-x) \, ds \, dy \right)^{2p} \\
        & \lesssim |\varepsilon|^{2p}
    \end{align*}
	Combining the above bounds, we arrive at
	\begin{equation}{\label{eqn:temporal_regularity}}
		\sup_{[0,T]}\sup_{x \in \mathbb{R}^{n}}\mathbf{E}\left(\lvert u(t+\varepsilon,x)-u(t,x) \rvert^{2p}\right) \lesssim |\varepsilon|^{(\beta \wedge \rho) p} \, ,
	\end{equation}
	for all 
	\begin{equation}{\label{eqn:beta_range}}
		0 < \beta < \min\left\{ \frac{\Izero}{n + \Izero}, \frac{\widetilde{\eta} \Iinf}{n + 2} \right\} < \min\left\{ \frac{\Izero}{n + \Izero}, \frac{ \min\{\indu, \widetilde{K}\}\Iinf}{n + 1} \right\} 
	\end{equation} 
    Finally, putting together equations \eqref{eqn:spatial_regularity} and \eqref{eqn:temporal_regularity}, we can apply Kolmogorov's Continuity Theorem to obtain a version of the solution $u$ in $\mathcal{C}^{\nicefrac{\alpha}{2}, \nicefrac{\beta}{2}}_{\operatorname{loc}}$ for all $\alpha$ and $\beta$ in the ranges described in equations \eqref{eqn:alpha_range} and \eqref{eqn:beta_range} respectively. This completes the proof of Theorem \ref{thm:main_thm}.
	\section{Acknowledgements}
	The author would like to thank his advisor Davar Khoshnevisan for suggesting the problem and his invaluable input and support. The author would also like to thank Tom Alberts, Li-Cheng Tsai for a careful reading of an earlier draft and providing suggestions for improvements.
	
	%\bibliographystyle{amsplain}
	%\bibliography{HC_References}

\end{document}